\title{The ring of modular forms
for the even unimodular lattice of signature (2,10)}
\author[K.~Hashimoto]{Kenji Hashimoto}
\address{
Graduate School of Mathematical Sciences,
The University of Tokyo,
3-8-1 Komaba,
Meguro-ku,
Tokyo,
153-8914,
Japan.}
\email{hashi@ms.u-tokyo.ac.jp}
\author[K.~Ueda]{Kazushi Ueda}
\address{
Graduate School of Mathematical Sciences,
The University of Tokyo,
3-8-1 Komaba,
Meguro-ku,
Tokyo,
153-8914,
Japan.}
\email{kazushi@ms.u-tokyo.ac.jp}
\date{}
\begin{document}

\maketitle

\begin{abstract}
We show that the ring of modular forms with characters
for the even unimodular lattice of signature (2,10)
is generated by forms of weights
4, 10, 12, 16, 18, 22, 24, 28, 30, 36, 42, and 252
with one relation of weight 504.
The proof is based on the comparison
of the orbifold quotient
of the symmetric domain
with the root stack of the coarse moduli space.
\end{abstract}

\section{Introduction}
 \label{sc:introduction}

Let $P$ be an even non-degenerate lattice
of signature $(1, t)$ for some $0 \le t \le 19$.
A {\em $P$-polarized K3 surface} is a pair
$(Y, j)$ of a K3 surface $Y$
and a primitive lattice embedding
$
 j \colon P \hookrightarrow \Pic Y.
$
%The global Torelli theorem for K3 surfaces
%allow us to describe
%the moduli space $M^\circ$ of ample $M$-polarized K3 surfaces
%as the quotient $\cD^\circ / \Gamma$
%of an open subset of a bounded domain of type IV
%by a discrete group $\Gamma$.
%
%Choose an isotropic primitive vector $f$
%in the orthogonal complement $\bsM^\bot$
%of $\bsM$ in $L$, and
%consider the lattice 
%$
% \check{\bsM} := (\bZ f)^\bot_{\bsM^\bot} / \bZ f
%$
Lattice polarized K3 surfaces are introduced
by Nikulin \cite{MR544937}
and used by Dolgachev \cite{MR1420220}
to study mirror symmetry for K3 surfaces.
The \emph{mirror moduli space} of $P$-polarized K3 surfaces
is the moduli space of $\Pv$-polarized K3 surfaces,
where $\Pv \coloneqq (P \bot U)^\bot$ is the orthogonal complement
of the orthogonal sum of $P$ and
the even unimodular hyperbolic lattice $U$ of rank 2
inside the K3 lattice
$L \coloneqq E_8 \bot E_8 \bot U \bot U \bot U$.
Here we consider $P$ as a primitive sublattice of $L$ by the embedding $j$.

%The lattice of type $T_{2, 3, 7}$
\begin{figure}[t]
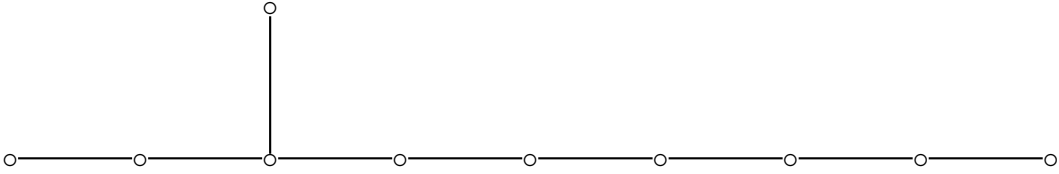

$$
\begin{psmatrix}
 & & \circ \\
 \circ & \circ & \circ & 
 \circ & \circ & \circ & 
 \circ & \circ & \circ
\end{psmatrix}
\ncline{2,1}{2,2}
\ncline{2,2}{2,3}
\ncline{2,3}{2,4}
\ncline{2,4}{2,5}
\ncline{2,5}{2,6}
\ncline{2,6}{2,7}
\ncline{2,7}{2,8}
\ncline{2,8}{2,9}
\ncline{1,3}{2,3}
$$
\caption{The Coxeter--Dynkin diagram
of the lattice $T_{2, 3, 7} = E_8 \bot U$}
\label{fg:237-diagram}
\end{figure}

Let $T_{2,3,7}$ be the lattice
determined by the Coxeter--Dynkin diagram
shown in \pref{fg:237-diagram}.
This lattice is isomorphic to $E_8 \bot U$ as an abstract lattice.
This is the most symmetric lattice
from the point of view of mirror symmetry,
in the sense that the mirror dual lattice $\Tv_{2,3,7}$ is isomorphic
to the original one;
$
 L \cong T_{2,3,7} \bot T_{2,3,7} \bot U.
$
Let $M$ be the coarse moduli space of
%pseudo-ample
$T_{2,3,7}$-polarized K3 surfaces,
which is isomorphic to the quotient
$\cD/\Gamma$
of a symmetric domain $\cD$ of type IV
by a discrete group $\Gamma$
 (see Section \ref{sc:lattice-polarization} for the definition of $\cD$ and $\Gamma$).
%Building on the work of Pinkham
%\cite{MR0498543}
Looijenga
\cite{MR761312}
proved that the graded ring of modular forms
(without characters)
is a polynomial ring
generated in degrees
\begin{align}
 \bw \coloneqq (4,10,12,16,18,22,24,28,30,36,42),
\end{align}
and
Brieskorn
\cite[Theorem 5]{MR642697}
proved that
the period map induces an isomorphism
from  the weighted projective space
$T^* \coloneqq \bfP(\bw)$ of weight $\bw$
to the Satake--Baily--Borel compactification
$M^*$ of $M$.

%\begin{theorem}[{Shiga \cite{Shiga_K3MIII}}]
% \label{th:shiga}
%There is an isomorphism
%$T^* \simto M^*$
%of algebraic varieties.
%%which sends the boundary $M^* \setminus M$
%%to 
%%There is a family $\frakY \to T^*$
%%of anticanonical hypersurfaces in $\bfP(1,6,14,21)$
%%over $T^*$
%%%the weighted projective space
%%%$
%%% T^* = \bfP(\bw)
%%%$
%%such that
%%\begin{enumerate}
%% \item
%%there is a subscheme $T \subset T^*$
%%satisfying $T^* \setminus T \cong \bfP^1$,
%% \item
%%geometric points of $T$ are
%%in one-to-one correspondence
%%with isomorphism classes of $T_{2,3,7}$-polarized K3 surfaces, and
%% \item
%%the period map
%%$
%% \Pi \colon T \to M
%%$
%%extends to an isomorphism
%%$
%% \Pi^* \colon T \simto M^*.
%%$
%%\end{enumerate}
%\end{theorem}

Let $\bM \coloneqq [\cD/\Gamma]$ and
$\bP(\bw) \coloneqq [(\bC^{11} \setminus \bszero)/\bCx]$
be the orbifold quotients,
whose coarse moduli spaces are
$M$ and $T^*$ respectively.
For a pair of an orbifold and a divisor on it,
one can perform the \emph{root construction}
\cite{MR2450211,Cadman_US}
to introduce a generic stabilizer along the divisor.
%There is a divisor $\bH^* \subset \bP(\bw)$ of weight 504,
%whose defining equation is given by the ratio
%$
% \Delta = d/r^3
%$
%of a discriminant $d$ and the cubic power $r^3$ of a resultant.
Let $\bT^*$ be the orbifold obtained from $\bP(\bsw)$
by the root construction of order two along a divisor $H_\bP$
of degree 504.
The main result of this paper is the following:

\begin{theorem} \label{th:main}
There is a bimeromorphic map
$\bT^* \dashrightarrow \bM$ of orbifolds,
which is an isomorphism in codimension one.
\end{theorem}

%This is a refinement of \pref{th:shiga},
%whose proof follows that of Shiga \cite{Shiga_K3MIII} closely.
The map in \pref{th:main} is not an isomorphism,
since the orbifold $\bT^*$ has singularities
coming from the singularities of the divisor $H_\bP$,
whereas $\bM$ is smooth (as an orbifold),
and $\bT^*$ is proper,
whereas $\bM$ is not.
%The orbifolds $\bT^*$ and $\bM$ are not isomorphic,
%not only because $\bM$ is non-compact,
%but also because $\bM$ is smooth whereas $\bT^*$ is singular.
%
Nevertheless,
as a corollary to \pref{th:main},
one obtains the following structure theorem
of the ring of modular forms:

\begin{corollary} \label{cr:main}
%The group $\Hom(\Gamma, \bCx)$ of characters of $\Gamma$
%is isomorphic to $\bZ/2 \bZ$ and generated by $\det$.
The graded ring of modular forms with characters for $\Gamma$
is generated
by forms of weights
4, 10, 12, 16, 18, 22, 24, 28, 30, 36, 42, and 252
with one relation of weight 504.
%Among these generators,
%only the form with weight $252$ is associated
%with a non-trivial character.
\end{corollary}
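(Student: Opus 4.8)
The plan is to identify $A(\Gamma)$ with the Cox ring of the orbifold $\bM$, to transport this identification along the codimension-one isomorphism of \pref{th:main}, and then to read off the presentation from the explicit description of $\bT^*$ as a root stack over $\bP(\bsw)$.

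First I would set up the dictionary between modular forms and orbifold line bundles on $\bM=[\cD/\Gamma]$. Writing $\scL$ for the tautological line bundle on $\bM$, condition (i) in the definition says that a weight-$k$ form is a section of $\scL^{\otimes k}$, and condition (ii) with a character $\chi\in\Char(\Gamma)=\Hom(\Gamma,\bCx)$ says that it is twisted by the flat line bundle $\scL_\chi$ attached to $\chi$; thus a modular form of weight $k$ and character $\chi$ is precisely an element of $H^0(\bM,\scL^{\otimes k}\otimes\scL_\chi)$. Because $\cD$ is contractible and carries no invertible holomorphic functions beyond the scalars, the classes $[\scL^{\otimes k}\otimes\scL_\chi]$ exhaust $\Pic(\bM)\cong\bZ\oplus\Char(\Gamma)$; and since $\dim\cD=10\ge 3$, Koecher's principle shows that no growth condition at the Baily--Borel boundary is imposed, so that these cohomology groups are already computed over $\bM$. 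Hence $A(\Gamma)=\bigoplus_{k,\chi}H^0(\bM,\scL^{\otimes k}\otimes\scL_\chi)$ is the Cox ring $\operatorname{Cox}(\bM)$.

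Next I would invoke \pref{th:main}. The bimeromorphic map $\bT^*\dashrightarrow\bM$ restricts to an isomorphism between open substacks whose complements have codimension at least two in $\bT^*$ and in $\bM$ respectively; since both orbifolds are normal, restriction induces an isomorphism of class groups and of the rings of sections of all divisorial sheaves, and extension across the codimension-$\ge 2$ loci gives $A(\Gamma)=\operatorname{Cox}(\bM)\cong\operatorname{Cox}(\bT^*)$ (in particular $A(\Gamma)$ is finitely generated). As $\scL$ corresponds to the pullback of $\cO_{\bP(\bsw)}(1)$, the grading by weight is preserved. It then remains to compute $\operatorname{Cox}(\bT^*)$: the Cox ring of $\bP(\bsw)=[(\bC^{11}\setminus\bszero)/\bCx]$ is the polynomial ring $\bC[x_4,x_{10},x_{12},x_{16},x_{18},x_{22},x_{24},x_{28},x_{30},x_{36},x_{42}]$ with the $x$'s placed in the displayed weights, the divisor $\bH^*$ of order $504$ is the zero locus of a homogeneous polynomial $F$ of degree $504$, and by the standard description of the Cox ring of a root stack \cite{Cadman_US} the order-two root construction along $\bH^*$ adjoins one new generator $x_{252}$, of weight $252$, subject to the single relation $x_{252}^2=F$. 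Therefore
\[
A(\Gamma)\;\cong\;\bC[x_4,x_{10},x_{12},x_{16},x_{18},x_{22},x_{24},x_{28},x_{30},x_{36},x_{42},x_{252}] / (x_{252}^2-F),
\]
which is exactly the claimed presentation; the $12$ minimal generators lie in the asserted weights and the unique relation has weight $504$.

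The step I expect to be the main obstacle is the identification $A(\Gamma)=\operatorname{Cox}(\bM)$: one must check that $\scL$ together with the flat bundles $\scL_\chi$ really generate the orbifold Picard group (so that $\operatorname{Cox}(\bM)$ is no larger than the ring of modular forms) and that the finite part $\Char(\Gamma)$ of this grading is matched, on the $\bT^*$-side, by the torsion produced through the root construction — note in particular that $x_{252}$ is then forced to be a modular form with a character of order two, consistently with $x_{252}^2=F$ having trivial character. The codimension-one invariance of Cox rings, the extension across the Baily--Borel boundary via Koecher's principle, and the root-stack computation are routine once the stacky framework is in place; the numerical input — the weight vector $\bsw$ and the order $504$ of $\bH^*$ — is inherited from \pref{th:shiga} and \pref{th:main} and is not re-derived here.
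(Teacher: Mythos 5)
Your proposal is correct and follows essentially the same route the paper intends: identify $A(\Gamma)$ with the ring of sections of all line bundles $\cO_\bM(k)\otimes\det^l$ on $\bM$ (the paper's computation $\Pic\bM\cong\Char(\bCx\times\Gamma)$ via Steinness of $\cQ$), transport it through the codimension-one isomorphism of \pref{th:main} to $\bT^*$, and read off the presentation of the Cox ring of the order-two root stack over $\bP(\bsw)$ along the degree-$504$ divisor, the new weight-$252$ generator being the square root of $\Delta_T$ carrying the character $\det$. This matches the paper's (unwritten but clearly intended) argument assembled from \pref{pr:isom}, the Picard-group discussion, and \pref{pr:degree}.
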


The group of characters of $\Gamma$ is isomorphic to $\mathbb{Z}/2\mathbb{Z}$
 and the relation in \pref{cr:main} comes from the defining equation
%\pref{eq:disc}
of the divisor $H_\bP \subset \bP(\bw)$.
%We expect that if one expands these discriminant and the resultant,
%then the resulting formula will be very complicated.
An explicit description of the generators
in terms of Eisenstein series and a Borcherds product
is given in \cite{1707.05029}.

This paper is organized as follows:
We collect basic definitions
on moduli spaces of lattice polarized K3 surfaces
and modular forms
in \pref{sc:lattice-polarization}.
In \pref{sc:shiga},
we recall the description of the coarse moduli space
of $T_{2,3,7}$-polarized K3 surfaces,
which is a particular case
of a much more general result of Looijenga
\cite{MR761312}.
%for the readers' convenience.
%everything in this section can be found
%in \cite{MR761312}
%in more general form.
The exposition in this section follows Shiga \cite{Shiga_K3MIII} closely.
%in this section.
In \pref{sc:H},
we lift the period map
$\Pi \colon T \to M$
between coarse moduli spaces
to a meromorphic map of orbifolds
%defined outside a complement of a subspace of codimension greater than one,
and prove \pref{th:main}.
We prove \pref{cr:main} in \pref{sc:cor},
and give a description
%\pref{eq:disc}
of the defining equation
of the divisor $H_\bP$
in terms of a discriminant and a resultant
in \pref{sc:disc}.

\emph{Acknowledgement}:
Special thanks go to Hironori Shiga
for kindly sharing the preprint \cite{Shiga_K3MIII}
with us.
We also thank Atsuhira Nagano for valuable discussions, and
Igor Dolgachev for pointing out the work of Looijenga.
K.~H.~is partially supported by Grants-in-Aid for Scientific Research
(17K14156).
K.~U.~is partially supported by Grant-in-Aid for Scientific Research
(15KT0105, 16K13743, 16H03930).

\section{Lattice polarized K3 surfaces}
 \label{sc:lattice-polarization}

Let
$
 L \coloneqq E_8 \bot E_8 \bot U \bot U \bot U
$
be the K3 lattice,
and $P \coloneqq T_{2,3,7}$ be the even unimodular lattice of signature $(1, 9)$
appearing in Introduction.
%Fix a connected component $V(P)^+$ of
%$
% V(P) = \lc x \in P_\bR \relmid (x,x)>0 \rc
%$
%and
We can choose a subset $\Delta(P)^+$ of
$
 \Delta(P) \coloneqq \lc \delta \in P \relmid (\delta, \delta) = -2 \rc
$
satisfying
\begin{enumerate}
 \item
$\Delta(P) = \Delta(P)^+ \coprod (- \Delta(P)^+)$ and
 \item
if $\delta_1,\delta_2 \in \Delta(P)^+$ and $\delta_1+\delta_2 \in \Delta(P)$,
 then $\delta_1+\delta_2 \in \Delta(P)^+$.
%$\Delta(P)^+$ is closed under addition (but not subtraction).
\end{enumerate}
There are ten indecomposable elements $\delta \in \Delta(P)^+$
(that is, $\delta$ cannot be written as $\delta_1+\delta_2$ with
$\delta_1,\delta_2 \in \Delta(P)^+$)
corresponding to the ten vertices in \pref{fg:237-diagram}.
The choice of $\Delta(P)^+$ is unique
up to the action of the orthogonal group $O(P)$
of the lattice $P$, and $O(P)$ is generated by the reflections along
 $\delta \in \Delta(P)^+$.
Define
\begin{align}
 C(P) &\coloneqq \lc h \in P \relmid
  (h, \delta) \ge 0 \text{ for any } \delta \in \Delta(P)^+ \rc, \\
 C(P)^\circ &\coloneqq \lc h \in P \relmid
  (h, \delta) > 0 \text{ for any } \delta \in \Delta(P)^+ \rc
\end{align}
and set
\begin{align}
 \Pic(Y)^{+} &\coloneqq C(Y) \cap H^2(Y; \bZ), \\
 \Pic(Y)^{++} &\coloneqq C(Y)^\circ \cap H^2(Y; \bZ),
\end{align}
where $C(Y)^\circ \subset H^{1,1}(Y) \cap H^2(Y; \bR)$
is the K\"{a}hler cone of $Y$
and $C(Y)$ is its closure.

\begin{definition}[{Nikulin \cite{MR544937}}]
%Let $P$ be an even non-degenerate lattice
%of signature $(1, t)$
%where $0 \le t \le 19$.
A {\em $P$-polarized K3 surface} is a pair $(Y, j)$
where $Y$ is a K3 surface and
$
 j \colon P \hookrightarrow \Pic(Y)
$
is a primitive lattice embedding.
An {\em isomorphism} of $P$-polarized K3 surfaces
$(Y, j)$ and $(Y', j')$ is an isomorphism
$f : Y \to Y'$ of K3 surfaces
such that $j = f^* \circ j'$.
%making the diagram
%\begin{align}
%\begin{psmatrix}[colsep=2cm]
%  & P & \\
% \Pic Y' & & \Pic Y
%\end{psmatrix}
%\psset{hookwidth=3mm,nodesep=2mm,shortput=nab,arrows=->}
%\ncline[arrows=H->,hookwidth=3mm]{1,2}{2,1}_{j'}
%\ncline[arrows=H->,hookwidth=-3mm]{1,2}{2,3}^{j}
%\ncline{2,1}{2,3}^{f^*}
% \label{eq:diagram1}
%\end{align}
%commute.
A $P$-polarized K3 surface is
\emph{pseudo-ample} if
$
 j(C(P)^\circ) \cap \Pic(Y)^+ \ne \emptyset,
$
and
\emph{ample} if
$
 j(C(P)^\circ) \cap \Pic(Y)^{++} \ne \emptyset.
$
\end{definition}

Fix a primitive lattice embedding
$
 i_P \colon P \hookrightarrow L
$
and let
$
 Q \coloneqq P^\bot
$
be the orthogonal complement inside $L$,
which is isomorphic to $P \bot U$
as an abstract lattice.
The period domain $\cD$ is
a connected component of
$$
 \{ [\Omega] \in \bfP(Q \otimes \bC)
  \mid (\Omega, \Omega) = 0, \ (\Omega, \Omegabar) > 0 \},
$$
which
%can be identified with the symmetric homogeneous space
%$
% O(2, 19-t) / SO(2) \times O(19-t)
%$
%of oriented positive-definite 2-planes in $Q_\bR$.
%It consists of two onnected components,
%each of which is isomorphic to
is a bounded Hermitian domain of type IV.
%Set
%\begin{align}
% \Gamma(P) = \{ \sigma \in O(L) \mid \sigma(m) = m
%  \text{ for any } m \in P \}
%\end{align}
%and $\Gamma$ be its image
%under the natural injective homomorphism
%$
% \Gamma(P) \hookrightarrow O(Q).
%$
%There is a subgroup $\Gamma^+$ of index two
%which preserves the connected component $\cD^+ \subset \cD$.
The global Torelli theorem \cite{MR0284440, MR0447635}
and the surjectivity of the period map \cite{MR592693}
show that the coarse moduli space of
pseudo-ample $P$-polarized K3 surfaces
is given by $M \coloneqq \cD/\Gamma$,
where $\Gamma \coloneqq O(Q)^+$
is the index two subgroup
of the orthogonal group of the lattice $Q$
preserving the connected component $\cD$,
which acts naturally on $\cD$
%Note that the action of $\Gamma$ on $\cD$ factors
through
%the quotient
$
 \Gammabar
  \coloneqq PO(Q)^+
   = \Gamma / \{ \pm \id \}.
$
The coarse moduli space of ample $P$-polarized K3 surfaces
is the subspace
$
 \left. \lb \cD \setminus H_\cD \rb \right/ \Gamma
$
of $M$, where
$
 H_\cD \coloneqq \bigcup_{\delta \in \Delta(Q)} \delta^\bot
$
is the union of hyperplanes
$
 \delta^\bot \coloneqq \{ [\Omega] \in \cD \mid (\Omega, \delta) = 0 \}.
$
%The closure of the period domain
%in the {\em compact dual}
%$$
% \cDv = \{ [\Omega] \in \bP(Q \otimes \bC) \mid
%  (\Omega, \Omega) = 0 \}
%$$
%of the period domain
%is denoted by $\overline{\cD}$.
%Its topological boundary is given by
%\begin{align}
% \overline{\cD} \setminus \cD
%  = \bigcup_{I \text{ : isotropic subspace of $P_\bR$}}
%      \bP(I_{\bC}) \cap \cD^*.
%\end{align}
%Since the signature of $P$ is $(2, 19-t)$,
%one either has $\rank I = 1$ or 2,
%so that $\bP(I_{\bC}) \cap D_P^*$ is
%one point or isomorphic to the upper half plane.
%The boundary component is \emph{rational}
%if $I$ is defined over $\bQ$.
%The {\em Satake-Baily-Borel compactification}
%is defined by
%$
% M^*
%  = \cD^* / \Gamma,
%$
%where
%$
% \cD^* = \cD \sqcup
%  \bigcup_{I \text{ : rational}}
%      \bP(I_{\bC}) \cap \overline{\cD}.
%$

Let $\cDtilde$ be the connected component of
$
 \{ \Omega \in Q \otimes \bC \mid (\Omega, \Omega) = 0, \ (\Omega, \Omegabar) > 0 \}
$
%which is the total space of a principal $\bCx$-bundle over $\cD$.
projecting onto $\cD$.
A \emph{modular form} of weight $k \in \bZ$
and character $\chi \in \Char(\Gamma) \coloneqq \Hom(\Gamma, \bCx)$
is a holomorphic function
$
 f \colon \cDtilde \to \bC
$
satisfying
\begin{enumerate}[(i)]
 \item
$f(\alpha z) = \alpha^{-k} f(z)$
for any $\alpha \in \bCx$, and
 \item
$f(\gamma z) = \chi(\gamma) f(z)$
for any $\gamma \in \Gamma$.
\end{enumerate}
%\begin{align} \label{eq:mod1}
% f(\lambda z) = \lambda^{-k} f(z)
%\end{align}
%for any $\lambda \in \bCx$ and
%\begin{align} \label{eq:mod2}
% f(\gamma z) = \chi(\gamma) f(z)
%\end{align}
%for any $\gamma \in \Gamma$.
The vector spaces
$A_k(\Gamma, \chi)$
of such modular forms
constitute the ring
\begin{align}
 A(\Gamma)
  \coloneqq \bigoplus_{k=0}^\infty \bigoplus_{\chi \in \Char(\Gamma)}
      A_k(\Gamma, \chi)
\end{align}
of modular forms.
We have $\Char(\Gamma) \cong \mathbb{Z}/2\mathbb{Z}$
 (see Lemmas \ref{lem:gamma1} and \ref{lem:gamma2}).

\section{Moduli space of $T_{2,3,7}$-polarized K3 surfaces}
 \label{sc:shiga}

Let $\bfP = \bfP(6,14,21,1)$ be the weighted projective space
of weight $(6,14,21,1)$ and consider the family
\begin{align}
 \varphi_{\Tbar^*} \colon
 \fYbar
  &= \lc ([x:y:z:w],t) \in \bfP \times \Tbar^* \relmid
   f(x,y,z,w;t) = 0 \rc
  \to \Tbar^* = \bA^{11} \setminus \bszero \label{eq:1}
\end{align}
of hypersurfaces of $\bfP$,
where
\begin{align}
 f(x,y,z,w;t) &= z^2 + y^3 + g_2(x,w;t) y + g_3(x,w;t),
  \label{eq:2} \\
%\end{align}
%Here $g_2(x,w;t)$ and $g_3(x,w;t)$ are defined by
%\begin{align}
\begin{split}
 g_2(x,w;t) &= t_4 x^4 w^4 + t_{10} x^3 w^{10}
  + t_{16} x^2 w^{16} + t_{22} x w^{22} + t_{28} w^{28} ,\\
 g_3(x,w;t) &= x^7 + t_{12} x^5 w^{12} + t_{18} x^4 w^{18}
  + t_{24} x^3 w^{24} + t_{30} x^2 w^{30}
  + t_{36} x w^{36} + t_{42} w^{42},
\end{split} \label{eq:3} \\
%\end{align}
%and
%\begin{align}
 t &= (t_4, t_{10}, t_{12}, t_{16}, t_{18}, t_{22}, t_{24},
  t_{28}, t_{30}, t_{36}, t_{42})
   \in \Tbar^*.
\end{align}
%are parameters.
The group $\bCx$ acts on $\bfP$ and $\Tbar^*$
in such a way that $\alpha \in \bCx$ sends
$[x:y:z:w] \in \bfP$ to $[x:y:z:\alpha^{-1}w]$ and
$t=(t_i)_{i=4}^{42}$ to $\alpha \cdot t = (\alpha^i t_i)_{i=4}^{42}$.
%
%The weight of this $\bGm$-action
%is summarized in \pref{tb:weights}.
%
%\begin{table}[h]
%\begin{align*}
%\begin{array}{ccccc|cccccc}
% s_0 & s_1 & s_2 & s_3 & s_4 &
% t_0 & t_1 & t_2 & t_3 & t_4 & t_5 \\
% \hline
% 28 & 22 & 16 & 10 & 4 &
% 42 & 36 & 30 & 24 & 18 & 12
%\end{array}
%\end{align*}
%\caption{Weights of the $\bGm$-action on $\Tbar = \bA^{11}$}
%\label{tb:weights}
%\end{table}
%
Since $f$ is invariant under the $\bCx$-action,
the family $\varphi_{\Tbar^*} \colon \fYbar \to \Tbar^*$
descends to a family $\varphi_{T^*} \colon \frakY \to T^*$
over the weighted projective space $T^*=\bfP(\bw)$
with weight
\begin{align} \label{eq:weight}
 \bw = (4,10,12,16,18,22,24,28,30,36,42).
\end{align}
The fiber of $\varphi_{\Tbar^*} \colon \fYbar \to \Tbar^*$ over $t \in \Tbar^*$
will be denoted by $\Ybar_t$.
Let $\Tbar$ be the set of $t \in \Tbar^*$
such that $\Ybar_t$ has at worst rational double points,
and $T \coloneqq \Tbar / \bCx$ be the quotient variety.

The following fact is well-known:

\begin{proposition}[{cf.~e.g.~\cite[Proposition I\!I\!I.3.2]{MR1078016}}]
 \label{pr:sing1}
An elliptic surface of the form
\begin{align}
 z^2 + y^3 + g_2(x)y + g_3(x) = 0
\end{align}
has a singularity worse than
rational double points
in the fiber over $x=a$
if and only if $\ord_a(g_2) \ge 4$ and
$\ord_a(g_3) \ge 6$.
\end{proposition}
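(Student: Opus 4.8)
The statement is local around the fibre over $x = a$, so after translating we may assume $a = 0$ and work in the affine chart $\bA^3$ with coordinates $(x,y,z)$, writing $S = \{f = 0\}$ with $f = z^2 + y^3 + g_2(x)y + g_3(x)$. Writing the two conditions as $\ord_0(g_2) \ge 4$ and $\ord_0(g_3) \ge 6$ — equivalently, that the Weierstrass equation $f$ is \emph{non-minimal} at $x = 0$ — one has $g_2(0) = g_3(0) = 0$, so the origin $o = (0,0,0)$ lies on $S$ and is a critical point of $f$; since $S$ is an elliptic surface its generic fibre is smooth, whence $\operatorname{Sing}(S)$ is finite and $S$ is normal, so $o$ is a genuine normal surface singularity. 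The plan is to prove the two implications separately, using in the non-minimal case the weighted blow-up $\sigma \colon \widetilde{\bA^3} \to \bA^3$ with weights $\operatorname{wt}(x,y,z) = (1,2,3)$, whose exceptional divisor $E$ is the weighted projective plane $\bfP(1,2,3)$.

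Suppose first that $\ord_0(g_2) \ge 4$ and $\ord_0(g_3) \ge 6$. Then $\operatorname{wt}(z^2) = \operatorname{wt}(y^3) = 6$, $\operatorname{wt}(g_2(x)y) \ge 6$ and $\operatorname{wt}(g_3(x)) \ge 6$, so $f$ has weighted multiplicity exactly $6 = 1+2+3$. Hence $\sigma^* S = \widetilde S + 6E$ while $K_{\widetilde{\bA^3}} = \sigma^* K_{\bA^3} + 5E$, and adjunction on the strict transform $\widetilde S$ gives $K_{\widetilde S} = \sigma^* K_S - C$ with $C = E \cap \widetilde S$. The weighted tangent cone $C = \{z^2 + y^3 + a_4 x^4 y + b_6 x^6 = 0\} \subset \bfP(1,2,3)$, where $a_4$ and $b_6$ are the coefficients of $x^4$ in $g_2$ and of $x^6$ in $g_3$, is of the shape $z^2 + (\text{odd-degree polynomial in } y)$, hence an integral curve missing the two orbifold points of $\bfP(1,2,3)$; so $C$ is a prime divisor over $o$ of discrepancy $-1$. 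Any further resolution only lowers discrepancies, so $o$ is not a canonical singularity, and since the canonical surface singularities are exactly the rational double points, $o$ is worse than a rational double point. (Concretely, adjunction in $\bfP(1,2,3)$ gives $\omega_C \simeq \mathcal{O}_C$, so $C$ has arithmetic genus $1$; the exceptional locus carries a genus-one curve and $R^1\pi_*\mathcal{O} \ne 0$, and $o$ is a simple elliptic singularity or a degeneration of one.)

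For the converse, suppose $\ord_0(g_2) \le 3$ or $\ord_0(g_3) \le 5$, i.e.\ that the Weierstrass equation is minimal at $0$. This is the classical direction: one resolves the finitely many singular points of $S$ over the central fibre by a sequence of (weighted) blow-ups. Minimality is precisely the condition ensuring that each blow-up strictly decreases a suitable invariant — so the process terminates — and that the exceptional locus consists only of $(-2)$-curves, so every singularity of $S$ over $x = 0$ is a rational double point. This is the substance of Tate's (equivalently N\'eron's) algorithm; the resulting $(-2)$-curve configurations are the $ADE$ graphs $A_{n-1}$, $A_1$, $A_2$, $D_{n+4}$, $E_6$, $E_7$, $E_8$ attached to the Kodaira fibre types $\mathrm{I}_n$, $\mathrm{III}$, $\mathrm{IV}$, $\mathrm{I}_n^*$, $\mathrm{IV}^*$, $\mathrm{III}^*$, $\mathrm{II}^*$ (type $\mathrm{II}$ giving a smooth point), and the verification is carried out in the cited reference.

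I expect the main obstacle to be precisely this last case analysis. The non-minimal implication is short once the weighted blow-up is in place — it comes down to the one discrepancy computation above. The minimal case genuinely requires passing through Kodaira's table, and in particular the infinite families $\mathrm{I}_n$ and $\mathrm{I}_n^*$ call for an inductive sequence of blow-ups (or the full force of Tate's algorithm); identifying the exceptional configurations with $ADE$ diagrams, while standard, is where the real content lies. As the proposition is classical, the paper simply invokes it.
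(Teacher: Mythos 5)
The paper does not actually prove this proposition: it is stated as well known and attributed to Miranda's book, so there is no internal argument to compare against. Measured against that, your proposal is fine and in fact supplies more than the paper does. Your forward direction (non-minimality at $x=a$ implies a singularity worse than a rational double point) is a complete and correct argument: the $(1,2,3)$-weighted blow-up has weighted multiplicity exactly $6$ on $f$ because of the terms $z^2$ and $y^3$, the exceptional curve $C=\{z^2+y^3+a_4x^4y+b_6x^6=0\}$ is integral (the cubic in $y$ is not a square) and avoids the two quotient-singular points of $\bfP(1,2,3)$, so the adjunction computation $K_{\widetilde S}=\sigma^*K_S-C$ is legitimate and exhibits an exceptional divisorial valuation of discrepancy $-1$; since Du Val singularities are exactly the canonical surface singularities, the point is worse than a rational double point. (You also correctly repair the paper's typo, reading the second condition as $\ord_a(g_3)\ge 6$, and you rightly check that $S$ is normal at the origin so that the discrepancy makes sense.) The converse direction you only sketch, deferring to Tate's algorithm and the Kodaira/ADE case analysis; that is an honest deferral, and it is precisely the content of the reference the paper itself cites, so your treatment is at worst at the same level of rigor as the paper's on that half. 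If you wanted a uniform argument rather than a case analysis for the converse, one standard route is to observe that under minimality each singular point of the Weierstrass model admits a resolution by a finite sequence of blow-ups along which the canonical class pulls back with nonnegative discrepancies (equivalently, that the multiplicity stays at most $2$ after suitable coordinate changes), but carrying that out still amounts to running the algorithm; your instinct that this is where the real work lies is accurate.
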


It follows that $\Ybar_t$ has a singularity
worse than rational double points
if and only if one can set
\begin{align}
\begin{split}
 g_2(x,w) &= a x^4 w^4, \\
 g_3(x,w) &= x^7 + 7 b x^6  w^6
\end{split}
 \label{eq:non_rdp}
\end{align}
by a change of coordinates.
By a simple change of coordinate
from \eqref{eq:non_rdp} to \eqref{eq:3},
%Note that one can normalize $c$ to 1
%by rescaling $w$, and
%remove the $x^6 w^6$-term
%by substituting $(x-b w^6/7)$ to $x$.
%Note that the surface defined by $(a, b)$
%and $(\alpha^4 a, \alpha^6 b)$ for $\alpha \in \bCx$
%are related by rescaling $x$.
%%\begin{align}
%% g_2(x-b w^6,w)
%%  &= a b^4 w^{28} - 4 a b^3 x w^{22} + 6 a b^2 x^2 w^{16}
%%   - 4 a b x^3 w^{10} + a x^4 w^4, \\
%% g_3(x-b w^6,w)
%%  &= 6 b^7 w^{42} - 35 b^6 x w^{36} + 84 b^5 x^2 w^{30}
%%   - 105 b^4 x^3 w^{24} + 70 b^3 x^4 w^{18} - 21 b^2 x^5 w^{12}
%%   + x^7.
%%\end{align}
%As a result,
one obtains the following:

\begin{corollary}
The complement $T^* \setminus T$ consists of points
parametrized as
\begin{align}
 t = [a:-4 a b:- 21 b^2:6 a b^2:70 b^3:-4 a b^3:-105 b^4
  :a b^4:84 b^5:-35 b^6:6 b^7]
\end{align}
%\begin{align}
% g_2(x-b,1)
%  &= a b^4 - 4 a b^3 x + 6 a b^2 x^2 - 4 a b x^3 + a x^4, \\
% g_3(x-b,1)
%  &= 6 b^7 - 35 b^6 x + 84 b^5 x^2 - 105 b^4 x^3
%   + 70 b^3 x^4 - 21 b^2 x^5 + x^7
%\end{align}
for $[a:b] \in \bfP(4,6)$.
\end{corollary}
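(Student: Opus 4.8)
The plan is to convert the singularity condition into two polynomial identities in the affine coordinate $u = x/w^6$ of the base $\bfP^1$ of the elliptic fibration $\Ybar_t \to \bfP^1$, and then to solve them directly. Set $\tilde g_2(u) = g_2(u,1) = t_4 u^4 + t_{10} u^3 + t_{16} u^2 + t_{22} u + t_{28}$, a polynomial of degree at most $4$, and $\tilde g_3(u) = g_3(u,1) = u^7 + t_{12} u^5 + t_{18} u^4 + t_{24} u^3 + t_{30} u^2 + t_{36} u + t_{42}$, which is monic of degree $7$ and has vanishing coefficient of $u^6$. Near the fibre over $u = \lambda$ the surface $\Ybar_t$ has local equation $z^2 + y^3 + \tilde g_2(u)\,y + \tilde g_3(u) = 0$, so that \pref{pr:sing1} applies.

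By \pref{pr:sing1} (equivalently, by the reduction to the normal form \eqref{eq:non_rdp} recorded above), $[t] \in T^* \setminus T$ if and only if there is a base point over which $\ord(g_2) \ge 4$ and $\ord(g_3) \ge 6$. No such point lies over $w = 0$: in the chart around it, $g_3$ restricts to the unit $1 + t_{12} w^{12} + \cdots + t_{42} w^{42}$, so $\ord(g_3) = 0$ there, which uses only that the coefficient of $x^7$ in \eqref{eq:3} is nonzero. Hence the offending point is $u = \lambda$ for some $\lambda \in \bC$. Since $\deg \tilde g_2 \le 4$, the condition $\ord_\lambda(\tilde g_2) \ge 4$ forces $\tilde g_2 = t_4 (u-\lambda)^4$; since $\tilde g_3$ is monic of degree $7$, the condition $\ord_\lambda(\tilde g_3) \ge 6$ forces $\tilde g_3 = (u-\lambda)^6(u-\mu)$ for some $\mu$, and comparing the coefficient of $u^6$ -- which is zero on the left -- then gives $\mu = -6\lambda$. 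Therefore $[t] \in T^* \setminus T$ precisely when
\[
 \tilde g_2 = t_4 (u-\lambda)^4, \qquad \tilde g_3 = (u-\lambda)^6(u+6\lambda)
\]
for some $\lambda \in \bC$.

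Expanding these two identities by the binomial theorem and matching coefficients with the expressions above exhibits $t = (t_4, t_{10}, \dots, t_{42})$ as the explicit system of monomials in $a := t_4$ and $b := \lambda$ appearing in the statement. Under the $\bCx$-action on $\Ttilde^*$ (which scales $t_i$ by $\alpha^i$), $t_4$ has weight $4$ and $\lambda = -t_{10}/(4 t_4)$ has weight $6$, so the pair $(a,b)$ is a point of $\bfP(4,6)$; as $t_4$ and $t_{12} = -21\lambda^2$ never vanish simultaneously, the construction is a well-defined morphism $\bfP(4,6) \to T^*$, whose image, by the displayed equivalence, is exactly $T^* \setminus T$.

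The computation is elementary and I do not expect a genuine obstacle; the two points needing (minor) care are the separate handling of the fibre over $w = 0$, and the observation that it is the vanishing of the $u^6$-coefficient of $g_3$ -- built into the shape of \eqref{eq:3} -- that forces $\mu = -6\lambda$ and hence reduces the bad locus from a two-parameter family to the one-dimensional $\bfP(4,6)$.
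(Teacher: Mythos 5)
Your proposal is correct and is exactly the argument the paper compresses into the phrase ``by a simple change of coordinate from \eqref{eq:non_rdp} to \eqref{eq:3}'': apply \pref{pr:sing1}, rule out the fibre at infinity, force $\tilde g_2 = a(u-\lambda)^4$ and $\tilde g_3 = (u-\lambda)^6(u+6\lambda)$ (your derivation of $\mu=-6\lambda$ from the vanishing $u^6$-coefficient is the paper's translation normalization run in reverse), and expand. The expansion indeed reproduces the stated monomials (with $t_{16}=6ab^2$, so the paper's ``$6ab$'' is a typo), and your weight bookkeeping identifying $(a,\lambda)$ with a point of $\bfP(4,6)$ is right.
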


%Since $\Ybar_t$ is an anti-canonical divisor
%of $\bfP(1,6,14,21)$,
The adjunction formula shows that
$\Ybar_t$ has the trivial canonical sheaf.
Since the minimal resolution of a surface is crepant
%$\Ybar_t$ is a crepant resolution
if and only if it has at worst rational double points,
one obtains the following:

\begin{corollary}
The minimal model $Y_t$ of $\Ybar_t$
is a K3 surface
if and only if $[t] \in T$.
\end{corollary}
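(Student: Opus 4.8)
The plan is to prove the two implications separately, using the explicit description of $T^* \setminus T$ obtained above together with the fact just recalled, namely that for a Gorenstein surface the minimal resolution is crepant if and only if the surface has at worst rational double points.

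For the ``if'' direction, suppose $t \in T$ and let $\pi \colon Y_t \to \Ybar_t$ be the minimal resolution, which I claim is also the minimal model. By the recalled fact $\pi$ is crepant, so $K_{Y_t} = \pi^* K_{\Ybar_t}$ is trivial, since $K_{\Ybar_t}$ is trivial by the adjunction formula. In particular $K_{Y_t}$ is nef, so $Y_t$ is already a minimal surface. The Weierstrass form \eqref{eq:2} presents $Y_t$ as a relatively minimal elliptic surface over the $\bfP^1$ with homogeneous coordinates $[x : w]$, equipped with a section: it is relatively minimal because $K_{Y_t}$, being trivial, meets no $(-1)$-curve, and it is genuinely elliptic because the general fibre is the smooth plane cubic $z^2 + y^3 + g_2 y + g_3 = 0$ — smoothness of the general fibre follows from \eqref{eq:3}, since at $w = 0$ one has $g_2 = 0$, $g_3 = x^7$, so the discriminant $4 g_2^3 + 27 g_3^2 = 27 x^{14}$ is not identically zero. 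I would then apply the canonical bundle formula for such a surface, $K_{Y_t} = f^* \mathcal{O}_{\bfP^1}\bigl(\chi(\mathcal{O}_{Y_t}) - 2\bigr)$; triviality of $K_{Y_t}$ forces $\chi(\mathcal{O}_{Y_t}) = 2$, and a smooth projective surface with trivial canonical bundle and $\chi(\mathcal{O}) = 2$ is a K3 surface by the Enriques--Kodaira classification (it is not abelian, for which $\chi(\mathcal{O}) = 0$).

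For the ``only if'' direction, suppose $t \notin T$. By the explicit description above, $\Ybar_t$ has the normal form \eqref{eq:non_rdp}, and in particular a single isolated singularity worse than a rational double point; hence, by the recalled fact, the minimal resolution $\pi \colon \widetilde Y_t \to \Ybar_t$ is not crepant. Thus the discrepancy divisor $\Delta = K_{\widetilde Y_t} - \pi^* K_{\Ybar_t} = K_{\widetilde Y_t}$ is nonzero and supported on the $\pi$-exceptional curves, and for every exceptional prime divisor $E$ adjunction gives $\Delta \cdot E = K_{\widetilde Y_t} \cdot E = 2 p_a(E) - 2 - E^2 \geq 0$, the inequality holding because $E^2 < 0$ while moreover $E^2 \leq -2$ whenever $p_a(E) = 0$, as $\widetilde Y_t$ has no $(-1)$-curve in the exceptional locus. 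Since that configuration is negative definite, $\Delta \cdot E \geq 0$ for all $E$ forces $-\Delta = -K_{\widetilde Y_t}$ to be effective, and it is nonzero. Then $h^0(\widetilde Y_t, m K_{\widetilde Y_t}) = 0$ for all $m \geq 1$, since an effective member of $|m K_{\widetilde Y_t}|$ together with the effective divisor $-m K_{\widetilde Y_t}$ would add up to an effective divisor linearly equivalent to $0$ and hence both vanish, contradicting $\Delta \neq 0$. Therefore $\widetilde Y_t$ has Kodaira dimension $-\infty$, so its minimal model $Y_t$ is a minimal rational or ruled surface, which is not a K3 surface.

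The main obstacle is the input to the second direction: the claim that the discrepancy divisor of the minimal resolution of a Gorenstein surface singularity worse than a rational double point is anti-effective and nonzero — equivalently, that ``non-Du~Val'' means ``non-canonical'' for such singularities. This is classical and, as indicated above, reduces to the negative definiteness of the exceptional locus of a minimal resolution (Artin); I would spell this out rather than merely cite it. Granting it, the first direction is essentially formal, modulo the canonical bundle formula for elliptic surfaces and the Enriques--Kodaira classification.
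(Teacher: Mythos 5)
Your proof is correct and follows the same route as the paper, whose own argument is exactly the two facts you start from: the adjunction formula gives $\omega_{\Ybar_t} \cong \cO_{\Ybar_t}$, and the minimal resolution is crepant precisely when the singularities are rational double points. You simply make explicit the details the paper leaves implicit --- the canonical bundle formula and the Enriques--Kodaira classification for the ``if'' direction, and the non-positivity and non-vanishing of the discrepancy on a minimal resolution (hence Kodaira dimension $-\infty$) for the ``only if'' direction.
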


Let $P = T_{2,3,7} = E_8 \bot U$ be the lattice
appearing in Introduction.
%whose Coxeter--Dynkin diagram is given in \pref{fg:237-diagram}.

\begin{proposition} \label{pr:polarization}
The minimal model $Y_t$ for any $[t] \in T$
has a natural structure of a pseudo-ample
$P$-polarized K3 surface.
\end{proposition}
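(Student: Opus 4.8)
The plan is to construct a primitive lattice embedding $P \hookrightarrow \Pic(Y_t)$ and verify it is pseudo-ample in the sense of the definition above, using the explicit elliptic fibration structure visible in the equation \eqref{eq:2}. The family \eqref{eq:1}--\eqref{eq:3} is built from a Weierstrass-type equation $z^2 + y^3 + g_2 y + g_3 = 0$ over the $w$-line (with $x$ playing the role of the affine coordinate on the base $\mathbf{P}^1 = \mathbf{P}(6,1)$ after we quotient by the $\mathbf{C}^\times$-action), so $Y_t$ carries an elliptic fibration $\pi : Y_t \to \mathbf{P}^1$. First I would read off the degrees: the Weierstrass data has $g_2$ of degree $8$ and $g_3$ of degree $12$ in the homogeneous sense compatible with a rational elliptic-surface-type twist, so $\pi$ has section and the fibration is that of a K3 surface, consistent with the previous corollary. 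Next I would locate the reducible/additive fibres. The shape of $g_3$, with leading term $x^7$ and no $x^6$ term, together with $g_2$ starting at $x^4$, forces a fibre of type $II^*$ (an $\widetilde{E}_8$ configuration) over one of the two points $x = \infty$ or $w=0$ — this is exactly the locus where $\operatorname{ord}(g_2) \ge 4$, $\operatorname{ord}(g_3) \ge 5$ but $<6$, the boundary case of \pref{pr:sing1}. Thus $\Pic(Y_t)$ contains the classes of the zero section $O$, the general fibre $F$, and the eight components of the $II^*$ fibre not meeting $O$, and these span a sublattice isometric to $U \bot E_8 = T_{2,3,7} = P$ by the standard computation for elliptic K3 surfaces (the section and fibre give $U$, the extra fibre components give $E_8$, and they are orthogonal because the components meet $O$ trivially).

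Second, I would check that this embedding $j : P \hookrightarrow \Pic(Y_t)$ is primitive. This follows from the general fact that for a Jacobian elliptic K3 surface the sublattice generated by the zero section, a general fibre, and the non-identity components of the singular fibres is primitively embedded in $\Pic(Y_t)$ (it is the image of the obvious sublattice of the Néron--Severi lattice, and primitivity can be seen either via the Shioda--Tate exact sequence or by noting that the orthogonal complement is the transcendental-plus-Mordell-Weil part, which contains no torsion obstruction). I would cite this as standard.

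Third — and this is where the real work lies — I must produce a K\"ahler class inside $j(C(P)^+)$, i.e. show the polarization is pseudo-ample. Concretely, this means exhibiting an element $h \in C(P)^+ \subset P$ whose image $j(h)$ lies in $\Pic(Y_t)^{+} = \overline{C(Y_t)} \cap H^2(Y_t;\mathbf{Z})$, the closure of the K\"ahler cone. The natural candidate is a positive combination $aO + bF$ (plus possibly a correction by fibre components) that is nef: it has positive self-intersection for suitable $a,b$, and it is nef precisely when it is non-negative on every effective curve, which for an elliptic K3 reduces to checking it against the finitely many fibre components and the sections. After choosing $\Delta(P)^+$ compatibly with the chamber containing this class, one gets $h \in C(P)^+$; the point is that $C(P)^+$ is defined by one fixed chamber structure on $P$, whereas $Y_t$ varies, so I need the \emph{same} nef class to work for all $t \in T$ — this is handled by taking $h$ supported on $U$ (i.e. on $O$ and $F$, away from the $E_8$-components), where the intersection numbers are locally constant in the family, and then enlarging slightly to land strictly inside $C(P)^+$. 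The main obstacle is bookkeeping the chamber/Weyl-group choices so that the fixed $h$ is simultaneously in the positive cone chamber $C(P)^+$ and nef on every $Y_t$; once the elliptic fibration with its $II^*$ fibre is pinned down this is a finite check, but it requires care to make it uniform over the whole base $T$. Finally I would remark that "natural" here means the construction depends only on the algebraic family, hence is compatible with the $\mathbf{C}^\times$-action and descends to $T^*$, giving the family of pseudo-ample $P$-polarized K3 surfaces that will be used to build the period map to $\cD/\Gamma$.
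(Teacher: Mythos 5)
Your construction lands on the same sublattice as the paper's proof, but by a different route. The paper works directly in $\bfP(6,14,21,1)$: the divisor at infinity $\Ybar_t \cap \{w=0\}$ is the rational curve $x^7+y^3+z^2=0$, which passes through the three quotient singularities of the ambient space and gives $\Ybar_t$ an $A_6$-, an $A_2$- and an $A_1$-point; resolving these produces $6+2+1=9$ exceptional $(-2)$-curves which, together with the strict transform of the curve at infinity, form exactly the ten-node $T_{2,3,7}$ configuration of \pref{fg:237-diagram}. You instead pass to the standard Weierstrass form in $\bfP(1,4,6,1)$ and read off a fibre of Kodaira type $\mathrm{II}^*$ over $w=0$ from $\ord g_2\ge 4$, $\ord g_3=5$ (note that ``$x=\infty$'' and ``$w=0$'' are the same point of the base, not two candidate points); your ten curves --- the section plus the nine components of the $\mathrm{II}^*$ fibre --- are the same ten curves, organized as $U\bot E_8$ rather than as the $T_{2,3,7}$ graph. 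Both routes are fine: the paper's avoids invoking the Kodaira/Tate classification, while yours makes the $U$-summand visible, which is the structure actually exploited later in the proof of \pref{pr:surjectivity}. You are also more careful than the paper in raising primitivity, though it is immediate here: $U\bot E_8$ is unimodular, and a unimodular sublattice is automatically a primitive direct summand.

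The one step that would fail as written is your nef-class candidate for pseudo-ampleness. Any class of the form $aO+bF$ is orthogonal to the eight non-identity components of the $\mathrm{II}^*$ fibre that span the $E_8$-summand, so it lies on the walls $\delta^\bot$ of those roots and can never lie in the open chamber $C(P)^+$; and your proposed remedy of keeping $h$ ``supported on $U$'' to get uniformity in $t$ is in direct tension with the need to move off exactly those walls. The correct (and standard) formulation is to choose $\Delta(P)^+$ so that the ten effective $(-2)$-curves are the simple roots --- the $T_{2,3,7}$ diagram is a fundamental system for the full reflection group of $P$ --- and to take for $h$ the integral class with $(h,C_i)=1$ for all ten curves $C_i$ (integral because $P$ is unimodular); since the whole configuration is constant over $T$, this choice is automatically uniform in $t$. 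To be fair, the paper's own proof elides the pseudo-ampleness verification entirely, so this is a gap relative to your own, more ambitious, outline rather than relative to the published argument.
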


\begin{proof}
The divisor
$
 \Ybar_t \cap \{ w = 0 \}
$
of $\Ybar_t$
at infinity is given by
\begin{align}
 Y_\infty = \lc [x:y:z] \in \bfP(6,14,21) \relmid x^7+y^3+z^2=0 \rc,
\end{align}
which is a rational curve.
The hypersurface $\Ybar_t$ has
\begin{itemize}
 \item
an $A_6$-singularity at $[0:-1:1:0]$,
 \item
an $A_2$-singularity at $[-1:0:1:0]$, and
 \item
an $A_1$-singularity at $[1:-1:0:0]$,
\end{itemize}
all coming from the singularity
of the ambient space $\bfP$.
It follows that
the minimal resolution $Y_t \to \Ybar_t$
has a configuration of $(-2)$-curves,
whose dual intersection graph is given
by $T_{2,3,7}$.
% shown in \pref{fg:237-diagram}.
\end{proof}

We call the singularities appearing in the proof
of \pref{pr:polarization}
as \emph{generic singularities of the family}.

\begin{proposition}
The $P$-polarized K3 surface $Y_t$ for $[t] \in T$ is ample
if and only if $\Ybar_t$ has only generic singularities of the family.
\end{proposition}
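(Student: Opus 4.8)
The plan is to detect ampleness of $Y_t$ from its period point via the description of the moduli of ample $P$-polarized K3 surfaces in \pref{sc:lattice-polarization}. Since $Y_t$ is pseudo-ample by \pref{pr:polarization}, it is ample if and only if its period point $[\Omega_t] \in \cD$ lies in $\cD^\circ = \cD \setminus H_\cD$; by the Lefschetz $(1,1)$-theorem this holds if and only if $\Pic(Y_t) \cap Q$ contains no vector of square $-2$, where $Q = i_P(P)^\bot$ and we identify $P$ with the polarizing sublattice $j(P) = \langle \widetilde Y_\infty, C_1, \dots, C_9 \rangle \subset \Pic(Y_t)$ spanned, by the proof of \pref{pr:polarization}, by the strict transform $\widetilde Y_\infty$ of $Y_\infty$ together with the exceptional curves $C_1, \dots, C_9$ of $\pi \colon Y_t \to \Ybar_t$ over the generic singularities.

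The key input I would establish first is that $H := \pi^*\!\big(\mathcal{O}_{\Ybar_t}(1)\big)$ lies in $j(P) \otimes \bQ$. Indeed $w$ has weight one in $\bfP(6,14,21,1)$, so $\mathcal{O}_{\Ybar_t}(1)$ is the class of $Y_\infty = \Ybar_t \cap \{w = 0\}$, and hence, after passing to a multiple to avoid Weil-versus-Cartier subtleties, $H = \widetilde Y_\infty + \sum_k a_k C_k$ for some $a_k \in \bQ_{\ge 0}$, which lies in $j(P) \otimes \bQ$ by the previous paragraph. Moreover $H$ is nef and big, and by the projection formula together with ampleness of $\mathcal{O}_{\Ybar_t}(1)$ an irreducible curve $C$ on $Y_t$ satisfies $H \cdot C = 0$ exactly when $C$ is contracted by $\pi$.

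I would then argue the two implications. First, the singular locus of $\Ybar_t$ along $\{w = 0\}$ is always exactly the three generic points: the singularities of $\bfP(6,14,21,1)$ meeting $\{w = 0\}$ lie on the three coordinate lines, $Y_\infty$ meets these only at the generic points, and off those the partials of $f$ do not vanish simultaneously on $\{w = 0\}$. So a non-generic rational double point of $\Ybar_t$ lies in $\{w \ne 0\}$, whence its exceptional curve $E$ is disjoint from $\widetilde Y_\infty$ and from all $C_k$; thus $[E] \in \Pic(Y_t) \cap Q$ has square $-2$, so $[\Omega_t] \in H_\cD$ and $Y_t$ is not ample. Conversely, if $\Ybar_t$ has only generic singularities then the $\pi$-exceptional curves are precisely $C_1, \dots, C_9$, and given $\delta \in \Pic(Y_t) \cap Q$ with $\delta^2 = -2$ I may assume $\delta$ effective by Riemann--Roch, say $\delta = \sum_i n_i E_i$ with $n_i > 0$ and $E_i$ irreducible; since $H \in j(P)$ and $\delta \in j(P)^\bot$ we get $0 = \delta \cdot H = \sum_i n_i (E_i \cdot H)$ with each $E_i \cdot H \ge 0$, forcing $E_i \cdot H = 0$ and hence $E_i \in \{C_1, \dots, C_9\} \subset j(P)$ for all $i$, so that $\delta \in j(P) \cap j(P)^\bot = \{0\}$, a contradiction. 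Hence $\Pic(Y_t) \cap Q$ has no $(-2)$-vector and $Y_t$ is ample.

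The step I expect to require the most care is the identification $H \in j(P) \otimes \bQ$: one must know that $\widetilde Y_\infty$ and the exceptional curves over the generic singularities really do span all of $j(P) \cong T_{2,3,7}$, which is exactly the content of the proof of \pref{pr:polarization}, and check that the non-Cartier behaviour caused by $Y_\infty$ passing through the singular points of $\Ybar_t$ is harmless after passing to a multiple. Once this is in place both implications are short, the only further ingredients being the Lefschetz $(1,1)$-theorem and the moduli-theoretic description of $\cD^\circ$ recalled in \pref{sc:lattice-polarization}.
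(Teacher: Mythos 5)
Your proof is correct and follows essentially the same route as the paper's: ampleness is detected by the period point avoiding the reflection hyperplanes, a $(-2)$-class in $Q \cap \Pic(Y_t)$ is realized as a $(-2)$-curve contracted by $Y_t \to \Ybar_t$, and such a curve corresponds to a non-generic singularity precisely because it is orthogonal to, hence not contained in, $j(P)$. Your auxiliary class $H = \pi^*\cO_{\Ybar_t}(1)$, a rational class in $j(P)$ that is nef and vanishes exactly on contracted curves, simply makes explicit the step the paper asserts without elaboration, namely that orthogonality to $P$ forces the $(-2)$-curve to be contracted in $\Ybar_t$.
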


\begin{proof}
A $P$-polarized K3 surface $Y_t$ is strictly pseudo-ample
if and only if its period $\Omega = H^{2,0}(Y_t)$ is on the reflection hyperplane
$H_\delta$ of a root $\delta \in \Delta(Q)$.
This happens if and only if the element $\delta$ or $-\delta$
considered as a cohomology class
by the embedding $Q \subset H^2(Y_t;\bZ)$
 (which is induced by the $P$-polarization)
is Poincar\'{e} dual to a $(-2)$-curve,
since the orthogonal lattice of $\Omega$
inside $H^2(Y_t;\bZ)$ is the N\'{e}ron-Severi lattice.
This $(-2)$-curve is contracted in $\Ybar_t$
since $\delta$ is orthogonal to $P$,
so that it appears as a singularity of $\Ybar_t$,
which must be distinct from the generic singularities in the family
since the class $\delta$ is not contained in $P$.
\end{proof}

\begin{proposition} \label{pr:surjectivity}
For any pseudo-ample $P$-polarized K3 surface $Y$,
there exist $[t] \in T$ and an isomorphism
$Y \simto Y_t$
of pseudo-ample $P$-polarized K3 surfaces.
%In other words,
%the period map
%$
% \Pi : T \to  M
%$
%is surjective.
\end{proposition}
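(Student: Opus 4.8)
The plan is to exhibit, for a given pseudo-ample $P$-polarized K3 surface $Y$, an explicit Weierstrass-type model realizing it inside the weighted projective space $\bfP(1,6,14,21)$, and then to match that model with a fiber of the family $\frakY \to T^*$. First I would use the $P$-polarization $j\colon P = E_8 \bot U \hookrightarrow \Pic(Y)$ together with the pseudo-ampleness assumption to produce a distinguished linear system on $Y$. Concretely, the hyperbolic summand $U \subset P$ determines an elliptic fibration $Y \to \bfP^1$ (the isotropic vector in the image of $U$ intersected with the pseudo-ample cone gives a nef class of square $0$ with a section coming from the other generator of $U$), while the $E_8$ summand maps into the reducible fibers, forcing a fiber of type $\mathrm{II}^*$; this is exactly the setup in which the Weierstrass form degenerates so that $\deg g_3$ is as large as it can be while $\deg g_2$ stays small, i.e.\ the shape recorded in \eqref{eq:2}, \eqref{eq:3}. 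I would make this precise by computing the contribution of an $E_8$ fiber to the Weierstrass data via Kodaira's table (or Proposition \ref{pr:sing1} read in reverse), pinning the orders of vanishing of $g_2, g_3$ at the point $w = 0$, and identifying the curve at infinity with $x^7 + y^3 + z^2 = 0$ as in the proof of \pref{pr:polarization}.

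Next I would normalize the Weierstrass equation. After choosing generators of the relevant sheaves, $Y$ is realized as a hypersurface
\begin{align}
 z^2 + y^3 + g_2(x,w)\,y + g_3(x,w) = 0
\end{align}
in $\bfP(1,6,14,21)$ with $g_2, g_3$ weighted-homogeneous of the appropriate degrees; the $\mathrm{II}^*$ fiber over $w = 0$ forces $g_2$ to be divisible by $x^4 w^4$ and $g_3$ to have leading term $x^7$ (up to rescaling), which after absorbing scalars by the residual torus action is exactly the ansatz \eqref{eq:3}. Thus $Y \cong \Ybar_t$ for some $t \in \Ttilde^* = \bA^{11}\setminus\bszero$. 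Since $Y$ is a K3 surface, the preceding corollary forces $[t] \in T$, and the K3 $Y_t$ obtained as the minimal model of $\Ybar_t$ is identified with $Y$; one then checks that the $P$-polarization built into $Y_t$ in \pref{pr:polarization}, which reads off the $E_8 \bot U$ configuration from the $\mathrm{II}^*$ fiber and the section, agrees with the given $j$ up to an automorphism of $P$, so the identification is one of pseudo-ample $P$-polarized K3 surfaces.

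The main obstacle is the rigidity of the normalization: I must show that \emph{every} pseudo-ample $P$-polarized K3 surface, not merely a generic one, lands in this family — in particular that the elliptic fibration always has a section (so that the Weierstrass form is available with no Jacobian twist), that the $E_8$ lattice always sits inside a single fiber rather than spreading across several, and that no further terms beyond those in \eqref{eq:3} are needed. The section exists because $U \subset P$ contains a class of self-intersection $0$ and one of self-intersection $-2$ meeting it once, and on a pseudo-ample polarized surface these are represented by a nef fiber class and an effective section; the concentration of $E_8$ in one fiber follows from the fact that an $E_8$ root sublattice of the fiber components of an elliptic K3, being of rank $8$ and negative definite with the right discriminant, can only be the full set of non-identity components of a $\mathrm{II}^*$ fiber. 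The accounting of degrees — that the monomials allowed in $g_2, g_3$ after the $x^4w^4$, $x^7$ normalization are precisely those listed — is then the routine Kodaira–Weierstrass bookkeeping and is where one invokes \pref{pr:sing1} to exclude further degeneration while staying in $T$. I would also need the standard fact that two Weierstrass models related by the allowed coordinate changes give isomorphic polarized surfaces, so that the choice of normalizing scalars only affects $t$ within its $\bCx$-orbit, i.e.\ within a single point $[t] \in T \subset T^*$.
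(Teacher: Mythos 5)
Your proposal follows essentially the same route as the paper's proof: the hyperbolic summand $U$ produces the elliptic fibration with a section, the $E_8$ summand forces a fiber of Kodaira type $\mathrm{II}^*$ over $w=0$, and the resulting order-of-vanishing constraints on $g_2, g_3$ in the Weierstrass model are normalized (using the residual automorphisms of the ambient weighted projective space) into the form \eqref{eq:3}. One small slip of wording: the $\mathrm{II}^*$ condition makes $g_2$ divisible by $w^4$ with $\deg_x g_2 \le 4$, so that its \emph{top} term is $x^4w^4$ --- saying $g_2$ is ``divisible by $x^4w^4$'' would force $g_2 = a x^4 w^4$, which together with the analogous degeneration of $g_3$ is exactly the excluded non-RDP locus \eqref{eq:non_rdp}.
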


\begin{proof}
We identify $P$ with its image
by $j \colon P \hookrightarrow \Pic(Y)$.
Choose a basis $\{ e, f \}$
of the orthogonal summand $U$ of
$P = U \bot E_8$
in such a way that
$
 (e, e) = (f, f) = (e,f)-1 = 0
$
and $f \in C(P)$.
The pseudo-ampleness of $Y$ implies that
$f$ is nef.
Then one can show
(cf.~\cite[\S 3, Theorem 1]{MR0284440})
that $Y$ admits a unique structure of an elliptic K3 surface
with a section
such that $f$ is the class of a fiber and
$e-f$ is the class of a section.

An elliptic K3 surface with a section
admits a Weierstrass model of the form
\begin{align} \label{eq:weierstrass1}
 z^2 + y^3 + g_2(x,w) y + g_3(x,w) = 0
\end{align}
in $\bP(1,4,6,1)$
(cf.~e.g.~\cite[Section 4]{MR2732092}).
Since the sublattice $E_8 \subset P$ is orthogonal to $f \in U$,
it is generated by irreducible components
of a fiber of Kodaira type I\!I*.
One can choose a coordinate
in such a way that this fiber lies over
the point $x = \infty$ (or $w = 0$) in $\bP^1$.
In order for the elliptic surface \eqref{eq:weierstrass1}
to have a singular fiber of type I\!I* at $\infty$,
one needs
\begin{align}
 \ord_\infty g_2(x,w) \ge 4, \quad
 \ord_\infty g_3(x,w) = 5, \ \text{ and } \ 
 \ord_\infty \Delta(x,w) = 10,
\end{align}
where
$
 \Delta = 4 g_2^3 + 27 g_3^2
$
(cf.~e.g.~\cite[Table IV.3.1]{MR1078016}).
This requires
\begin{align}
 g_2(x,w) &= \sum_{i=0}^4 s_i x^i w^{8-i}, \\
 g_3(x,w) &= \sum_{i=0}^7 s'_j x^j w^{12-j}.
\end{align}
%If $s'_7=0$,
%then $Y$ has a singularity worse than rational double point by \pref{pr:sing1}.
Since $\ord_\infty g_3(x,w) = 5$, one can set $s'_7 = 1$ and $s'_6 = 0$
by a change of coordinates of $(x,w)$.
The birational map
\begin{align}
 \lb \frac{x}{w}, \frac{y}{w^4}, \frac{z}{w^6} \rb
  \mapsto
 \lb \frac{x}{w^6}, \frac{y}{w^{14}}, \frac{z}{w^{21}} \rb
\end{align}
of the ambient space
from $\bfP(1,4,6,1)$ to $\bfP(6,14,21,1)$
sends \eqref{eq:weierstrass1}
to \eqref{eq:1}.
This map is compatible with $P$-polarizations
because the map $j \colon P \hookrightarrow \Pic(Y)$
is uniquely determined by identifying the configuration of
$(-2)$-curves contained in the fiber over $x=\infty$ % $Y\setminus \{ w\neq 0 \}$
with that in \pref{fg:237-diagram}.
Hence
\pref{pr:surjectivity} is proved.
\end{proof}

The proof of \pref{pr:surjectivity} also shows the following:

\begin{proposition} \label{pr:auto1}
For an isomorphism $\phi \colon Y_t \to Y_{t'}$
of pseudo-ample $P$-polarized K3 surfaces,
there exists an element $\alpha \in \bCx$
such that the following diagram commutes;
\begin{align}
\begin{CD}
 Y_t @>{\varphi}>> \Ybar_t @>{\iota}>> \bfP(6,14,21,1) \\
 @V{\phi}VV @. @VV{\phi_\alpha}V \\
 Y_{t'} @>{\varphi'}>> \Ybar_{t'} @>{\iota'}>> \bfP(6,14,21,1).
\end{CD}
\end{align}
Here $\varphi$ and $\varphi'$ are minimal resolutions,
$\iota$ and $\iota'$ are inclusions,
and $\phi_\alpha$ is the automorphism of $\bfP(6,14,21,1)$
sending $[x:y:z:w]$ to $[x:y:z:\alpha w]$.
\end{proposition}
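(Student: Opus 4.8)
The plan is to revisit the construction carried out in the proof of \pref{pr:surjectivity}: starting from a pseudo-ample $P$-polarized K3 surface $(Y,j)$, that argument produces an embedding $Y \hookrightarrow \bfP(6,14,21,1)$ realizing $Y$ as the minimal resolution of a hypersurface of the normalized shape \eqref{eq:2}--\eqref{eq:3}, and every choice entering it is either canonically determined by the polarization datum or fixed by the normalizations $t_7 = 1$ and $t_6 = 0$, with one residual rescaling of the coordinate $w$ left free. So the first step is to transport the canonical data across $\phi$. Since $\phi$ is an isomorphism of $P$-polarized K3 surfaces, $\phi^*$ restricts to the identity on $P = U \bot E_8 \subset \Pic$; in particular it fixes the fibre class $f$, the section class $e-f$, and the sublattice $E_8$. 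Hence $\phi$ carries the elliptic fibration with section on $Y_t$ (unique by the argument of \pref{pr:surjectivity}, cf.~\cite{MR0284440}) to the one on $Y_{t'}$, carries the fibre of Kodaira type $\mathrm{II}^*$ generated by $E_8$ to the fibre of type $\mathrm{II}^*$, and therefore induces an automorphism of the base $\bP^1$ fixing the point $w = 0$. Consequently $\phi$ descends to an isomorphism $\bar\phi \colon \Ybar_t \to \Ybar_{t'}$ compatible with $\varphi$, $\varphi'$ and the projections to $\bP^1$.

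Next I would upgrade this descent to the ambient space. The composite $\iota \circ \varphi \colon Y_t \to \bfP(6,14,21,1)$ factors through the contraction of the fibre components disjoint from the zero section followed by the Weierstrass embedding, both canonical once the fibration and section are fixed; moreover $x,w$ are recovered (up to rescaling, once $t_6 = 0$ has been imposed) from the graded ring of the base relative to $w = 0$, and $y,z$ from the relative graded ring of $\Ybar_t \to \bP^1$. Hence $\bar\phi$ extends to an automorphism $\Phi$ of $\bfP(6,14,21,1)$ with $\Phi \circ \iota = \iota' \circ \bar\phi$, i.e.~$\Phi \circ \iota \circ \varphi = \iota' \circ \varphi' \circ \phi$. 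It then remains to identify $\Phi$ with some $\phi_\alpha$. Any automorphism of $\bfP(6,14,21,1)$ is a composite of coordinate rescalings and triangular substitutions $x \mapsto \nu x + c\,w^6$, $y \mapsto \mu y + (\text{weight-}14\text{ polynomial in }x,w)$, $z \mapsto \rho z + (\text{weight-}21\text{ polynomial in }x,y,w)$, $w \mapsto \lambda w$. Since $\Phi$ must carry the equation of $\Ybar_t$, normalized as in \eqref{eq:2}--\eqref{eq:3}, to the equally normalized equation of $\Ybar_{t'}$, the triangular parts are forced to vanish: a nonzero $c$ or any translation $x \mapsto x + c'w^6$ on the base resurrects the $x^6 w^6$ term in $g_3$, and an additive part in $y$ or $z$ produces a $y^2$- or $z$-linear term. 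Thus $\Phi$ is diagonal, and matching the pinned-down $z^2$, $y^3$ and $x^7$ coefficients modulo the weighted-projective identification $(x,y,z,w) \sim (s^6 x, s^{14} y, s^{21} z, s\,w)$ normalizes the action on $x,y,z$ to the identity, leaving $\Phi \colon [x:y:z:w] \mapsto [x:y:z:\alpha w]$ for some $\alpha \in \bCx$. Plugging back shows $[t'] = \alpha^{-1}\cdot[t]$ for the $\bCx$-action of \eqref{eq:weight}, which is consistent.

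The hard part will be Step 2, namely verifying carefully that a polarized isomorphism genuinely descends and then \emph{extends} to the weighted-projective ambient: this rests on the reconstruction of the entire embedding from the polarization datum --- the fibration and section from $f$ and $e-f$, the $\mathrm{II}^*$ fibre from $E_8$, and the homogeneous coordinates from the associated graded rings --- together with the rigidity that the $\widetilde{E}_8$ configuration of a $\mathrm{II}^*$ fibre admits no nontrivial graph automorphism, so that $\phi$ cannot permute its components. Once this is in place, the identification of $\Phi$ is essentially bookkeeping, though the weighted-projective identifications warrant a little care: for instance the fibrewise inversion $z \mapsto -z$ on the Weierstrass model corresponds to $\phi_{-1}$, since $[x:y:-z:w] = [x:y:z:-w]$ in $\bfP(6,14,21,1)$, and one should check that such coincidences are exactly what makes the residual group of coordinate changes equal to $\{\phi_\alpha \mid \alpha \in \bCx\}$.
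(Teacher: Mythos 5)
Your proposal is correct and follows essentially the same route as the paper: both arguments reduce to the observation (extracted from the proof of \pref{pr:surjectivity}) that the polarization canonically determines the elliptic fibration, section, and $\mathrm{II}^*$ fibre, hence the normalized Weierstrass embedding up to an automorphism of $\bfP(6,14,21,1)$, and then check that the only ambient automorphisms preserving the normalized form \eqref{eq:2}--\eqref{eq:3} are $w \mapsto \alpha w$. You merely spell out in more detail the steps the paper leaves implicit (the descent to the Weierstrass model, the extension to the ambient space, and the elimination of the triangular and diagonal parts modulo the weighted-projective identification), and your closing remark that the fibrewise inversion $z \mapsto -z$ is realized as $\phi_{-1}$ is a correct and useful sanity check.
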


\begin{proof}
The proof of \pref{pr:surjectivity} shows that
the embeddings $\iota$ and $\iota'$
given by the Weierstrass models \eqref{eq:1}
are determined by the pseudo-ample $P$-polarization
up to an automorphism of $\bfP(1,6,14,21)$.
The automorphism group of $\bfP(1,6,14,21)$ consists of
transformations of the form
\begin{align}
 x &\mapsto \beta_1 x + \beta_2 w^6, \\
 y &\mapsto \gamma_1 y + \gamma_2 x w^8 + \gamma_3 x^2 w^2, \\
 z &\mapsto \delta_1 z + \delta_2 x y w + \delta_3 x w^{15}
  + \delta_4 x^2 w^{9} + \delta_5 x^3 w^3, \\
 w &\mapsto \alpha w.
\end{align}
The only automorphism which preserves
the Weierstrass model \eqref{eq:1}
is $w \mapsto \alpha w$.
\end{proof}

By applying the global Torelli theorem
\cite{MR0284440, MR0447635}
and the surjectivity of the period map
\cite{MR592693},
one concludes that
the period map
$
 \Pi_T \colon T \to M
$, which is induced by $[t] \mapsto H^{2,0}(Y_t)$,
is an isomorphism.
We study the period map $\Pi_T$ and its lifts in details in the next section.

%\begin{corollary}%[{Looijenga \cite{MR761312}}]
% \label{cr:Looijenga}
%The period map
%$
% \Pi \colon T \to M
%$
%is an isomorphism.
%\end{corollary}

%Note that there is a ``generic'' monodromy for our family.
%Namely, when $\alpha=1$ goes to $\alpha=-1$,
% we have the same K3 surface for $(\alpha^i t_i)$, $\alpha\in \{\pm 1\}$
% because the $t_i$ are all even,
% but these K3 surfaces should be identified by $w\mapsto -w$
% (or, equivalently, $z \mapsto -z$).

%%\pref{pr:subjectivity} and \pref{pr:auto1} implies the following:
%The discussions so far is summarized as follows:
%
%\begin{theorem}[{\cite{Shiga_K3MIII}}]
%The period map gives an isomorphism
%$
% \Pi \colon T \to M.
%$
%\end{theorem}
%
%%\pref{pr:auto1} shows that
%%the period map
%%$
%% \Pi : T \to  M
%%$
%%is injective,
%%and \pref{th:shiga}.2 is proved.
%%This concludes the proof of \pref{th:shiga}.2.

\section{Orbifold structure in codimension one}
 \label{sc:H}

%The biholomorphic extension of the period map
%in \pref{th:shiga}.3 follows as a corollary.

%Recall that $\cDtilde$ is the connected component of
%$
% \lc \Omega \in Q \otimes \bC \relmid (\Omega, \Omega) = 0, \ 
%  (\Omega, \Omegabar) > 0 \rc
%$
%which projects to $\cD$.
The natural projection
$
 \pi \colon \cDtilde \to \cD
$
is a principal $\bCx$-bundle,
which is trivial
since it admits a section
\begin{align} \label{eq:section}
 \lc e - ((v,v)/2) f + v \in \cDtilde \relmid
  v = v_1 + \sqrt{-1} v_2 \in P \otimes \bC, \ 
  (v_2, v_2) > 0 \rc.
\end{align}
It induces a principal $\bCx$-orbi-bundle
\begin{align} \label{eq:[pi]}
 [\pi] \colon \ld \left. \cDtilde \right/ \Gamma \rd \to \bM
  \coloneqq \ld \left. \cD \right/ \Gamma \rd
  \cong \ld \left. \cDtilde \right/ \lb \Gamma \times \bCx \rb \rd
\end{align}
since $\pi$ is equivariant
with respect to the natural action of
$\Gamma$.
The section \eqref{eq:section} of $\pi$
is not a section of $[\pi]$
since it is not $\Gamma$-equivariant.
The line bundle associated with
the principal $\bCx$-orbi-bundle \pref{eq:[pi]}
%$[\pi] \colon \ld \left. \cDtilde \right/ \Gamma \rd \to \bM$
will be denoted by $\cO_\bM(1)$.

The fixed locus $\cD^g$
of an element $g \in \Gammabar$
is the intersection of a proper linear subspace and $\cD$.
The element $g$ is said to be a \emph{reflection}
if $\cD^g$ is the intersection of a hyperplane with $\cD$.

\begin{lemma} \label{lem:gamma1}
Any reflection in $\Gammabar$ is given by
$
 z \mapsto z + (z, \delta) \cdot \delta
$
for some $\delta \in \Delta(Q)$.
\end{lemma}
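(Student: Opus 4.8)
The plan is to argue entirely inside the lattice $Q$. By definition, $g$ is a reflection when $\ker(g-\Id) \subseteq Q\otimes\bC$ is a hyperplane; since $Q$ has signature $(2,10)$, its isotropic subspaces have dimension at most $2$, so this is equivalent to $A := g-\Id$ having rank $1$ (over $\bR$, hence over $\bC$). Writing the rank‑one endomorphism as $A(z) = (z,\alpha)\beta$ for nonzero $\alpha,\beta \in Q\otimes\bR$ — possible since the form is non‑degenerate — I would then impose that $g = \Id + A$ be an isometry, obtaining
\[
 (z,\alpha)(\beta,w) + (w,\alpha)(\beta,z) + (z,\alpha)(w,\alpha)(\beta,\beta) = 0
 \qquad \text{for all } z,w \in Q\otimes\bR.
\]
Restricting to $w \in \alpha^\bot$ forces $(z,\alpha)(\beta,w)=0$ for all $z$, hence $(\beta,w)=0$ for such $w$, so $\beta \in (\alpha^\bot)^\bot = \bR\alpha$, say $\beta = \mu\alpha$ with $\mu \ne 0$; substituting back, and using that $(z,\alpha)(w,\alpha)$ is not identically zero, forces $\mu(\alpha,\alpha) = -2$. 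In particular $(\alpha,\alpha)\neq 0$, and $g$ is the reflection $s_\alpha \colon z \mapsto z - \frac{2(z,\alpha)}{(\alpha,\alpha)}\alpha$ in the non‑isotropic vector $\alpha$.

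Next I would normalize $\alpha$. Since $A$ is defined over $\bZ$, its kernel $\alpha^\bot$ is a rational subspace, hence the line $\bR\alpha$ is rational and meets $Q$ in a primitive rank‑one sublattice $\bZ\delta_0$; as $s_\alpha$ depends only on this line, $g = s_{\delta_0}$. Integrality of $g$ means $(\delta_0,\delta_0)$ divides $2(z,\delta_0)$ for every $z \in Q$. Because $Q = E_8 \bot U \bot U$ is unimodular and $\delta_0$ is primitive, the map $z \mapsto (z,\delta_0)$ is a surjection $Q \to \bZ$, so $(\delta_0,\delta_0)$ divides $2$; since $Q$ is even, this leaves $(\delta_0,\delta_0) = \pm 2$.

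It remains to exclude $(\delta_0,\delta_0) = +2$ using $g \in \Gamma = O(Q)^+$, and this is the step I expect to require the most care. I would use that $\cD$ corresponds to one of the two orientation classes of positive‑definite real $2$‑planes in $Q\otimes\bR$, with $O(Q)^+$ consisting exactly of the isometries preserving that class. If $(\delta_0,\delta_0) < 0$, then $\delta_0^\bot$ has signature $(2,9)$, contains positive‑definite $2$‑planes, and these are fixed pointwise by $s_{\delta_0}$, so $s_{\delta_0} \in O(Q)^+$. If instead $(\delta_0,\delta_0) > 0$, any positive‑definite $2$‑plane through $\delta_0$ splits as $\bR\delta_0 \bot \bR e$ with $(e,e)>0$, and $s_{\delta_0}$ acts there by $\delta_0 \mapsto -\delta_0$, $e \mapsto e$, reversing the orientation; hence $s_{\delta_0} \notin O(Q)^+$. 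Therefore $(\delta_0,\delta_0) = -2$, i.e.\ $\delta := \delta_0 \in \Delta(Q)$, and putting $(\delta,\delta) = -2$ into the reflection formula gives $g(z) = z + (z,\delta)\delta$, as claimed. The only genuinely delicate inputs are the translation of "hyperplane fixed locus" into "$\operatorname{rank}(g-\Id)=1$" (where the signature rules out an isotropic fixed space) and the orientation bookkeeping above; everything else is formal.
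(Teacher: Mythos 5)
Your argument is correct and follows essentially the same route as the paper: take a primitive lattice vector $\delta$ spanning the orthogonal complement of the fixed hyperplane, use unimodularity to find $z$ with $(z,\delta)=1$ so that integrality of $g$ forces $(\delta,\delta)\mid 2$, use evenness to get $(\delta,\delta)=\pm 2$, and use $g\in O^+(Q)$ to exclude $+2$. You merely fill in two details the paper leaves implicit --- deriving the reflection formula from the rank-one condition on $g-\Id$, and the orientation-of-positive-$2$-planes argument ruling out $(\delta,\delta)=+2$ --- both of which you carry out correctly.
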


\begin{proof}
We may assume $\rank (Q^g)^\bot=1$
by taking $-g$ instead of $g$ if necessary.
Let $\delta\neq 0$ be a primitive element in $(Q^g)^\bot$.
Then we have $g(\delta)=-\delta$ and $g$ is given by
 $z \mapsto z - (2(z, \delta)/(\delta,\delta)) \cdot \delta$.
Since $Q$ is unimodular, there exists an element $z\in Q$
 such that $(z,\delta)=1$.
Since $g(z)\in Q$, it follows that $2/(\delta,\delta)\in \bZ$.
Hence $(\delta,\delta)=\pm 2$.
Since $g\in O^+(Q)$, we have $(\delta,\delta)=-2$.
\end{proof}

%\begin{lemma}
%$\Gamma$ is generated by reflections
%along elements of $\Delta(Q)$.
%\end{lemma}
%
%\begin{proof}
%(To be filled in.)
%\end{proof}

\begin{lemma} \label{lem:gamma2}
The action of $\Gamma$ on $\Delta(Q)$ is transitive.
\end{lemma}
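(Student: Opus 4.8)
The plan is to deduce the statement from the transitivity of the full orthogonal group $O(Q)$ on $\Delta(Q)$, and then to pass to the index-two subgroup $\Gamma = O(Q)^+$ by exhibiting a single explicit isometry which fixes one chosen root and lies outside $O(Q)^+$.

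First, recall that $Q$ is an even unimodular lattice of signature $(2,10)$, so $Q \cong P \bot U \cong E_8 \bot U \bot U$; in particular the discriminant group of $Q$ is trivial. Every $\delta \in \Delta(Q)$ is primitive, and Eichler's criterion applies because $Q$ has $U$ as an orthogonal direct summand. For a unimodular lattice it reduces to the statement that $O(Q)$ acts transitively on the set of primitive vectors of a fixed square; in particular $O(Q)$ acts transitively on $\Delta(Q)$.

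Next, fix once and for all a root $\delta_0$ contained in the $E_8$-summand of $Q \cong E_8 \bot U \bot U$ (such roots exist), and let $h \in O(Q)$ be the isometry acting by $-\Id$ on one of the two $U$-summands and by the identity on $E_8$ and on the other $U$-summand. Then $h(\delta_0) = \delta_0$. On the other hand, a positive-definite $2$-plane $\Pi \subset Q \otimes \bR$ may be spanned by a positive vector in the negated $U$-summand together with a positive vector in its orthogonal complement $E_8 \bot U$, which has signature $(1,9)$ and hence contains positive vectors; on such a $\Pi$ the isometry $h$ acts with determinant $-1$. Therefore $h$ reverses the orientation of positive-definite $2$-planes, interchanges $\cD$ with $\overline{\cD}$, and so $h \notin O(Q)^+$.

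Finally, given an arbitrary $\delta \in \Delta(Q)$, choose $g \in O(Q)$ with $g(\delta) = \delta_0$. If $g^{-1} \in O(Q)^+$, then $g^{-1}$ carries $\delta_0$ to $\delta$; otherwise $g^{-1} h \in O(Q)^+$ and $(g^{-1} h)(\delta_0) = g^{-1}(\delta_0) = \delta$. In either case $\delta$ lies in the $\Gamma$-orbit of $\delta_0$, which proves the transitivity. The only delicate point — and the step I would be most careful about — is the verification in the third paragraph that $h$ genuinely lies outside $O(Q)^+$, that is, that negating a single hyperbolic summand reverses rather than preserves the orientation of the positive-definite part of $Q \otimes \bR$; once this sign is pinned down, the proof is just Eichler's criterion followed by a coset manipulation.
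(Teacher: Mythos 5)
Your proof is correct and follows essentially the same route as the paper: transitivity of the full group $O(Q)$ on $\Delta(Q)$ (the paper invokes Nikulin, you invoke Eichler — both apply to the unimodular lattice $Q \cong E_8 \bot U \bot U$), followed by correcting the sign using an element of the stabilizer of a root lying outside $O^+(Q)$; your explicit $h = -\Id$ on one hyperbolic summand is a concrete instance of the paper's assertion that $O(\delta^\bot) \neq O^+(\delta^\bot)$, and your orientation check for $h$ (it negates one positive and one negative direction, hence reverses the orientation of a positive-definite $2$-plane) is right. One minor imprecision: Eichler's criterion requires splitting off $U \bot U$, not merely a single copy of $U$ as you state — but $Q$ does split off $U \bot U$, so nothing is lost.
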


\begin{proof}
For any $\delta_1,\delta_2\in \Delta(Q)$,
it follows from Nikulin's theory of discriminant forms of lattices 
\cite{MR525944}
that
$
 \delta_i^\bot \cong \langle 2 \rangle \bot U \bot E_8
$
and that there exists an element $g\in O(Q)$ such that
$g(\delta_1)=\delta_2$.
We may assume $g\in O^+(Q)$
since $O^+(\delta_i^\bot) \subsetneq O(\delta_i^\bot)$.
\end{proof}

Now we construct the following diagram:
\begin{align} \label{eq:diagram}
\begin{CD}
 \Utilde @>>> \Ubar @>>> U @>>> V @>>> T \setminus S_T \\
 @VV{\Pi_\Utilde}V @VV{\Pi_\Ubar}V @VV{\Pi_U}V @VV{\Pi_V}V @VV{\Pi_T}V \\
 \cDtilde \setminus S_{\cDtilde}
  @>{\{\pm 1\}}>> \cDbar \setminus S_{\cDbar}
  @>{\bCx/\{ \pm 1 \}}>> \cD \setminus S_\cD
  @>{\Gammabar_2}>> M_2 \setminus S_{M_2}
  @>{\bsmu}>> M \setminus S_M
\end{CD}
\end{align}
where horizontal arrows are principal bundles and
vertical arrows are isomorphisms.

%Recall that
%$
% H_{\cD}
%  \coloneqq \cup_{\delta \in \Delta(Q)} \delta^\bot
%  \subset \cD
%$
%is the union of reflection hyperplanes.
%$
% \delta^\bot \coloneqq \lc z \in Q \otimes \bC \relmid (z,\delta) = 0 \rc,
%$
Let
$
 S_\cD \subset \cD
$
%$S_{\cDtilde} \subset \cDtilde$
be the locus where the stabilizer of the action of
$
 \Gammabar
$
is non-trivial
and does not coincide with a group of order two
generated by a reflection.
The locus $S_\cD$
contains
not only intersections
%$
% S_\cD = \bigcup_{\delta \ne \delta' \in \Delta(Q)}
%  H_\delta \cap H_{\delta'} \cap \cD^+
%$
of more than two reflection hyperplanes,
but also points where the corresponding lattice polarized K3 surface
has an automorphism other than
$
 [x:y:z:w] \mapsto [x:y:z:-w] = [x:y:-z:w].
$
%If we write the pull-back of $S_\cD$ to $\cDtilde$ as $S_{\cDtilde}$,
%then
The action of $\Gammabar$
on $\cD \setminus (H_{\cD} \cup S_{\cD})$ is free,
and the stabilizer of a point in
$H_{\cD} \setminus (H_{\cD} \cap S_{\cD})$
is a group of order two
generated by a reflection.

Since the group $\Gammabar$ is countable,
%one has the following:
%
%\begin{lemma}
the locus $H_{\cD}$ is a countable union of hyperplanes, and
the locus $S_{\cD}$ is a countable union of linear subspaces
of codimension greater than one.
%\end{lemma}
%
The images of $H_\cD$ and $S_\cD$ in $M$
will be denoted by $H_M$ and $S_M$ respectively.

Let $\Gammabar_2$ and $\bsmu$ be the kernel and the image of the determinant map
$
 \det \colon \Gammabar \to \bCx.
$
%Although $\bsmu$ and $\bsmu'$ are isomorphic as groups,
%it is convenient to distinguish them.
%to avoid confusion.
Set $M_2 \coloneqq \cD / \Gammabar_2 $,
$H_{M_2} \coloneqq H_\cD/\Gammabar_2$, and
$S_{M_2} \coloneqq S_\cD/\Gammabar_2$.
The map
$
 M_2 \setminus S_{M_2}
  \to M \setminus S_M
$
is a double cover
branched along $H_M \setminus S_M$,
and the map
$
 \cD \setminus S_\cD
  \to M_2 \setminus S_{M_2}
$
is the universal cover
since the action of $\Gammabar_2$
on $\cD \setminus S_\cD$ is free.

Let $H_T$ and $S_T$
be the inverse images of $H_M$ and $S_M$
by the period map
$
 \Pi \colon T \simto M.
$
The pull-backs of $H_T$ and $S_T$
by the projection $\Tbar \to T$
will be denoted by $H_\Tbar$ and $S_\Tbar$.
Let further $\Delta_T \in \bC[t]$ be the defining equation of $H_T$, and
\begin{align} \label{eq:V}
 V = \Vbar / \bCx, \text{~where~}
 \Vbar = \lc (t,s) \in (\Tbar \setminus S_\Tbar) \times \bA^1
  \relmid s^2 = \Delta_T(t) \rc,
\end{align}
be the double cover of $T \setminus S_T$
branched along $H_T \setminus S_T$.
Here $\alpha \in \bCx$
acts on $\bA^1$
in \pref{eq:V}
by
$
 s \mapsto \alpha^{\deg \Delta_T/2} s.
$
(We will show $\deg \Delta_T=504$ in Proposition \ref{pr:degree}.)
The restriction
$
 \Pi|_{T \setminus S_T} \colon T \setminus S_T \simto M \setminus S_M
$
of the period map
lifts to an isomorphism
$
 \Pi_V \colon V \to M_2 \setminus S_{M_2},
$
since both $V$ and $M_2 \setminus S_{M_2}$
are the double covers of isomorphic
% smooth
 varieties branched along smooth divisors
which are identified under the isomorphism.
%The passage
%from $T \setminus S_T$
%to the orbifold quotient $[V/\bsmu]$
%is the \emph{root construction}
%\cite{MR2450211,Cadman_US}
%of order 2
%along $H_T \setminus S_T$.

The isomorphism $\Pi_V$ can further be lifted
to an isomorphism
$
 \Pi_U \colon U \to \cD \setminus S_\cD,
$
where $U$ is the universal cover of $V$.
This isomorphism is equivariant
under the action of the covering transformation group
$
 \Gal \lb (\cD \setminus S_\cD)
   \relmiddle/ (M \setminus S_M) \rb
  = \Gammabar.
$

Set $(x_1,x_2,x_3,x_4)=(x,y,z,w)$
and $(q_1,q_2,q_3,q_4)=(6,14,21,1)$.
The Griffiths--Dwork method provides the element
\[
 \Omegabar
  = \Res \frac{\sum_{i=1}^4 (-1)^i q_i x_i
  d x_1 \wedge \cdots \wedge \widehat{d x_i}
   \wedge \cdots \wedge d x_4}{f}
\]
of $H^0 \lb \Omega_{\fYbar/\Tbar}^2 \rb$,
which gives a 2-form $\Omegabar_t$ on $\Ybar_t$ for each $t \in \Tbar$,
whose pull-back to $Y_t$ gives a holomorphic 2-form $\Omega_t$.

Let $\fL_{\Tbar} \to \Tbar \setminus (H_\Tbar \cup S_\Tbar)$
be the local system
whose fiber over $t \in \Tbar \setminus (H_\Tbar \cup S_\Tbar)$
is the second homology group $H_2(Y_t; \bZ)$.
For any $t \in \Tbar \setminus (H_\Tbar \cup S_\Tbar)$,
the fiber of the projection $\Tbar \to T$
above $[t] \in T$ can be identified
with $\bCx/\{ \pm 1 \}$, and
the monodromy of $\fL_{\Tbar}$
along the generator of $\pi_1(\bCx/\{ \pm 1 \}) \cong \bZ$
is %$- \id_{H_2(Y_t; \bZ)}$
induced by the automorphism
$[x:y:z:w] \mapsto [x:y:z:-w]$
of $\Ybar_t$,
 whose induced action on $Q$ is $- \id_Q$.

The monodromy of $\fL_\Tbar$ along $H_\Tbar$
is the Picard--Lefschetz transformation
with respect to the vanishing cycle $C$,
which is the reflection
along the homology class $[C]$ of the vanishing cycle.
Note that the class $[C]$ is equal
to the class of the $(-2)$-curve
which defines the reflection hyperplane.
It follows that
the pull-back of $\fL_\Tbar$
to the double cover $\Vbar$ %\coloneqq V \times_T \Tbar$
does not have a monodromy
along the ramification divisor $H_{\Vbar}$,
and hence extends to a local system $\fL_{\Vbar}$
on $\Vbar$.

Let $\fL_\Ubar$ be the pull-back
of $\fL_\Vbar$ to $\Ubar \coloneqq U \times_T \Tbar$.
One has $\pi_1(\Ubar) \cong \pi_1(\bCx/\{ \pm 1\})$
since $U$ is the universal cover of $V$.
The monodromy of $\fL_\Ubar$
along the generator of $\pi_1(\Ubar) \cong \pi_1(\bCx/\{ \pm 1\})$
is given by $- \id_{H_2(Y_t; \bZ)}$.
It follows that the pull-back $\fL_\Utilde$ of $\fL_\Ubar$
to the non-trivial double cover $\Utilde \to \Ubar$
is trivial.
The period map $\Pi_T \colon T \to M$ is defined
in such a way that the lift
$\Pi_\Utilde \colon \Utilde \to \cDtilde \setminus S_{\cDtilde}$
is given by integration of $\Omega_t$
along a basis of $\fL_\Utilde$
obtained by choosing a global trivialization of $\fL_\Utilde$.
It follows from the construction that
the map $\Pi_{\Utilde}$ is equivariant
with respect to the natural action of the central extension
$\Gamma$ of $\Gammabar$ by $\{ \pm 1 \}$,
where $\Gammabar$ acts on the bases
of the principal $\bCx$-bundles
$
 \lb \Utilde \to U \rb
  \cong \lb \cDtilde \setminus S_{\cDtilde} \to \cD \setminus S_\cD \rb,
$
and $\{ \pm 1 \}$ acts on the fibers.
The map $\Pi_{\Utilde}$ induces an isomorphism
$
 \Pi_{\Ubar} \colon \Ubar \to \cDbar \setminus S_{\cDbar},
$
where $\cDbar \coloneqq \cDtilde / \{ \pm 1\}$
and $S_{\cDbar} \coloneqq S_{\cDtilde} / \{ \pm 1 \}$.

The action of $\alpha \in \bCx$ on $\Tbar$
sends a point $(t_i)_i$ to $(\alpha^i t_i)_i$.
The change of $f$ caused by this action
can be absorbed by the coordinate change
sending $w \mapsto \alpha^{-1} w$
and keeping $x$, $y$ and $z$ fixed.
This sends $\Omega$ to $\alpha^{-1} \Omega$,
so that the period will be multiplied by $\alpha^{-1}$.
This shows that $\Pi_\Utilde$ is $\bCx$-equivariant.
%and \pref{pr:isom} is proved.
%\end{proof}

%The situation is summarized as
%where horizontal arrows are principal bundles and
%vertical arrows are isomorphisms.

Let
%$H^*$ be the closure of $\Pi_T^{-1}(H_M)$ in $T^* =\bfP(\bsw)$, and
$H_{\bP}$ be the hypersurface
of $\bP(\bsw) \coloneqq \ld \Tbar / \bCx \rd$
defined by $\Delta_T$.
The orbifold $\bP(\bsw)$ has a generic stabilizer of order 2,
and the natural morphism
to the orbifold
$
% \bP \to
  \bP(\bsw/2) \coloneqq \ld \Tbar / (\bCx / \{ \pm 1 \}) \rd
$
without a generic stabilizer
is a $B \{ \pm 1\}$-bundle.
Let $\bT^*$ be the stack
obtained from $\bP(\bsw)$ by the \emph{root construction}
\cite{MR2450211,Cadman_US}
of order 2
along the divisor $H_{\bP}$.
The orbifold quotient
\begin{align}
 \bT
  \coloneqq \ld \left. \Vbar \right/ \lb \bsmu \times \bCx \rb \rd
  \cong \ld \left. \Utilde \right/ \lb \Gamma \times \bCx \rb \rd
\end{align}
%where
%$
% \Vbar \coloneqq \Tbar \times_T V,
%$
is an open substack of $\bT^*$.
Since $\Pi_\Utilde$ is $\Gamma \times \bCx$-equivariant,
one obtains an isomorphism
\begin{align} \label{eq:Pi_bT}
\begin{split}
 \Pi_{\bT}
  \colon \bT
%  &\coloneqq \ld \left. \lb \Ttilde \setminus S_\Ttilde \rb \right/ \bCx \rd
%  \cong \ld \left. \Utilde \right/ \lb \Gamma \times \bCx \rb \rd \\
  & \simto \bM \setminus S_\bM
%   \coloneqq \ld \left. \lb \cDtilde \setminus S_{\cDtilde} \rb \right/ \lb \Gamma \times \bCx \rb \rd
\end{split}
\end{align}
of orbifolds
where
$
 S_\bM \coloneqq \ld S_{\cDtilde} / (\Gamma \times \bCx) \rd.
$
%satisfying
%$
% [\Pi]^* \cO_{\bM \setminus S_\bM}(1)
%  \cong \cO_{\bT \setminus S_\bT}(1).
%$
Since the codimension of $S_{\cDtilde}$ in $\cDtilde$ is greater than one,
\pref{th:main} is proved.

\section{The canonical bundle and the total coordinate ring}
 \label{sc:cor}

A character of $\bCx \times \Gamma$ gives
a $\bCx \times \Gamma$-equivariant structure
on the trivial line bundle on $\cDtilde$,
which in turn gives a line bundle
on the orbifold quotient
$
 \bM \cong \ld \left. \cDtilde \right/ \lb \bCx \times \Gamma \rb \rd.
$
We write the line bundle on $\bM$
associated with the character
$
 \bCx \times \Gamma \ni (\alpha, g) \mapsto \alpha^{-k} \cdot (\det g)^l
$
as
$
 \cO_\bM(k) \otimes \det^{l}.
$

\begin{proposition} \label{pr:canonical}
The canonical bundle on $\bM$ is given by
$
 \omega_\bM \cong \cO_\bM(10) \otimes \det.
$
\end{proposition}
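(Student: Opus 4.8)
The plan is to reduce the statement to an equivariant adjunction computation on the affine cone $\cQ$. Since $\bM = [\cQ/(\bCx\times\Gamma)]$, a line bundle on $\bM$ is the same thing as a $(\bCx\times\Gamma)$-equivariant line bundle on $\cQ$, and under this identification $\omega_\bM$ is simply $\omega_\cQ$: the quotient morphism $\cQ \to \bM$ is a torsor under $\bCx\times\Gamma$ of relative dimension one whose relative cotangent bundle is the trivial equivariant bundle with fibre $\operatorname{Lie}(\bCx)$, on which $\bCx$ acts trivially (being abelian) and $\Gamma$ acts trivially (being discrete). So it suffices to determine the character of $\bCx\times\Gamma$ by which it acts on a generator of $\omega_\cQ$.

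To compute $\omega_\cQ$, fix an integral basis of $Q$ (which has rank $12$, being isometric to $P\bot U$) and write $V = Q\otimes\bC\cong\bC^{12}$ with linear coordinates $x_1,\dots,x_{12}$; set $q(\Omega)=(\Omega,\Omega)$ and $\eta = dx_1\wedge\cdots\wedge dx_{12}$. By definition $\cQ$ is an open subset of the smooth quadric cone $\{q=0\}\setminus\bszero$ inside $V\setminus\bszero$, so adjunction shows that $\omega_\cQ$ is freely generated by the nowhere-vanishing Poincar\'e residue $\omega_0 = \Res\bigl(\eta/q\bigr)$ of the meromorphic form $\eta/q$ along $\cQ$.

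It remains to read off how $\bCx\times\Gamma$ acts on $\omega_0$. The torus $\bCx$ acts by scaling $\Omega\mapsto\alpha\Omega$, so (as $\eta$ is a wedge of twelve one-forms, each scaled by $\alpha$) $\alpha^*\eta = \alpha^{12}\eta$ and $\alpha^*q = \alpha^2 q$, whence $\alpha^*\omega_0 = \alpha^{10}\omega_0$. The group $\Gamma = O(Q)^+$ acts $\bC$-linearly and preserves $q$, so $g^*q = q$ while $g^*\eta = \det(g)\,\eta$, whence $g^*\omega_0 = \det(g)\,\omega_0$; here $\det(g) = \pm 1$. Thus a section $f\,\omega_0$ of $\omega_\cQ$ descends to $\bM$ exactly when $f(\alpha\Omega) = \alpha^{-10} f(\Omega)$ and $f(g\Omega) = \det(g)f(\Omega)$, i.e.\ when $f$ is a modular form of weight $10$ with character $\det$; equivalently, $\bCx\times\Gamma$ acts on the generator $\omega_0$ through $(\alpha,g)\mapsto\alpha^{10}\det(g)$. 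Matching this with the normalisation of $\cO_\bM(k)\otimes\det^{k+l}$ fixed above, and using that $\det$ has order two, identifies $\omega_\bM$ with $\cO_\bM(10)\otimes\det$.

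The geometric input here --- adjunction for an affine quadric cone --- is completely routine, and I expect the only real care to be needed in the equivariant bookkeeping: on one hand, checking that no spurious twist enters when passing between $\omega_\bM$ on the ten-dimensional orbifold and $\omega_\cQ$ on the eleven-dimensional cone (that is the point of the remark about the relative cotangent bundle), and on the other hand keeping the two characters consistent with the paper's sign conventions for $\cO_\bM(k)$ and $\det$. It is worth stressing that the $\det$-twist is genuine rather than a normalisation artefact: by the lemma above $\Gamma$ contains reflections, so $\det\colon\Gamma\to\{\pm1\}$ is onto and the volume form $\eta$ really is not $\Gamma$-invariant.
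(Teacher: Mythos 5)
Your argument is correct and is essentially the paper's: both reduce to adjunction for the quadric $(\Omega,\Omega)=0$ (giving the weight $12-2=10$) together with the observation that $g^*\eta=\det(g)\,\eta$, the only cosmetic difference being that you work equivariantly on the affine cone $\cQ$ via the Poincar\'e residue while the paper quotes the projective adjunction formula on $\bfP(Q\otimes\bC)\cong\bfP^{11}$ and then inverts the sign convention for the $\bCx$-weight. The equivariant bookkeeping you make explicit --- triviality of the relative cotangent bundle of $\cQ\to\bM$ and the descent condition identifying sections of $\omega_\bM$ with weight-$10$ forms of character $\det$ --- is exactly what the paper leaves implicit, so no gap.
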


\begin{proof}
We first consider the canonical bundle of
$
 \cD \cong \ld \left. \cDtilde \right/ \bCx \rd,
$
which is an open subset defined by $\lb \Omega, \Omegabar \rb > 0$
of a quadratic hypersurface in $\bP(Q \otimes \bC)$
defined by $(\Omega, \Omega) = 0$.
In general, the canonical bundle
of a degree $k$ hypersurface $X$ in $\bP^n$
is given by $\cO_X(k-n-1)$.
We follow our convention for $\cO_{\bM}(k)$
and write the line bundle on
$
 \cD \cong \ld \left. \cDtilde \right/ \bCx \rd
$
associated with the character $\alpha \mapsto \alpha^{-k}$ of $\bCx$
as $\cO_{\cD}(k)$.
Since this is inverse to the usual convention,
one has $\omega_\cD \cong \cO_\cD(10)$.

Now we take the action of $\Gamma$ into account
and consider the canonical bundle of $\bM \cong \ld \left. \cDtilde \right/ (\bCx \times \Gamma) \rd$.
Since a reflection changes the sign of a top differential form,
one concludes that $\omega_\bM \cong \cO_\bM(10) \otimes \det$.
\end{proof}

It follows from \pref{th:main}
that $\Pic \bM$ is isomorphic to $\Pic \bT^*$.
%Recall that $\bT^*$ is obtained from $\bP(\bsw)$
%by the root construction along the divisor $H_{\bP} \subset \bP(\bsw)$
%defined by the defining equation $\Delta_T$ of $H_T$.
Let
$
 \cO_{\bT^*}(1)
  \coloneqq p^* \cO_{\bP}(1)
$
be the pull-back
of the positive generator $\cO_{\bP}(1)$
of $\Pic \bP(\bsw) \cong \bZ$
by the structure morphism
$
 p \colon \bT^* \to \bP(\bsw),
$
and
$
 \cO_{\bT^*}(H_{\bT^*})
$
be the tautological bundle
on the root stack
satisfying
\begin{align} \label{eq:root_rel}
 \cO_{\bT^*}(H_{\bT^*})^{\otimes 2}
  \cong p^* \cO_{\bP}(H_\bP).
%  \big( \cong \cO_{\bT^*}(\deg \Delta_T) \big).
\end{align}
It follows from \cite[Section 3.1]{Cadman_US}
that $\Pic \bT^*$ is generated by
$
 \cO_{\bT^*}(1)
%  \coloneqq p^* \cO_{\bP}(1)
$
%of the positive generator $\cO_{\bP}(1)$
%of $\Pic \bP(\bsw) \cong \bZ$
%by the structure morphism
%$
% p \colon \bT^* \to \bP
%$
and
%the tautological bundle
$
 \cO_{\bT^*}(H_{\bT^*})
$
%on the root stack
with relation \pref{eq:root_rel}.
%$
% \cO_{\bT^*}(H_{\bT^*})^{\otimes 2}
%  \cong \cO_{\bT^*}(\deg \Delta_T)
%  \coloneqq \cO_{\bT^*}(1)^{\otimes (\deg \Delta_T)}.
%$
This shows that $\Pic \bT^*$,
and hence $\Pic \bM$,
is isomorphic to $\bZ \oplus \bZ/2\bZ$.
The free part is generated by $\cO_{\bT^*}(1)$,
which is isomorphic to $\Pi_{\bT}^* \cO_{\bM}(1)$
since $\Pi_{\bT}$ comes from the $(\bCx \times \Gamma)$-equivariant morphism
$\Pi_{\Utilde}$.
The identification of the torsion part is determined uniquely
by the group structure of the Picard group as
$
 \Pi_{\bT}^* \lb \cO_\bM \otimes \det \rb
 \cong \cO_{\bT^*}(-\deg \Delta_T/2) \otimes \cO_{\bT^*}(H_{\bT^*}).
$

\begin{proposition} \label{pr:degree}
One has $\deg \Delta_T = 504$.
\end{proposition}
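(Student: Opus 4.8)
The plan is to read off $\deg \Delta_T$ from a comparison of the canonical bundle of $\bM$, computed in \pref{pr:canonical}, with the canonical bundle of the root-stack presentation of $\bM$ established in \pref{sc:H}.

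Recall from the corollary at the end of \pref{sc:H} that the period map yields an isomorphism $[\Pi] \colon \bT \setminus S_\bT \simto \bM \setminus S_\bM$ carrying $\cO_\bM(1)$ to $\cO_{\bT \setminus S_\bT}(1)$, and that $\bT \setminus S_\bT$ is, away from a locus of codimension at least two, the root stack of order two of $\bP(\bw)$ along the divisor $H_T = \{\Delta_T = 0\}$, with $\cO_\bT(1)$ the pullback of $\cO_{\bP(\bw)}(1)$ along the root-stack projection $\pi$. Since $S_\bM$, $S_\bT$ and the complement $T^* \setminus T$ all have codimension at least two, $\bM$ is smooth, and the root stack of $\bP(\bw)$ along $H_T$ is normal, the relevant restriction maps on Picard groups are injective; so it suffices to compare line bundles already defined on all of $\bM$ and on the whole root stack.

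Next I would compute the two sides. By \pref{pr:canonical}, $\omega_\bM \cong \cO_\bM(10) \otimes \det$, and since $\det$ takes values in $\{\pm 1\}$ one has $\det^{\otimes 2} \cong \cO_\bM$, hence
\[
 \omega_\bM^{\otimes 2} \cong \cO_\bM(20).
\]
On the root-stack side, write $\mathcal{L}$ for the tautological line bundle of the order-two root construction, so that $\mathcal{L}^{\otimes 2} \cong \pi^* \cO_{\bP(\bw)}(H_T) \cong \cO_\bT(\deg \Delta_T)$ and $\omega_{\mathrm{root}} \cong \pi^* \omega_{\bP(\bw)} \otimes \mathcal{L}$. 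The Euler sequence for $\bP(\bw)$ gives $\omega_{\bP(\bw)} \cong \cO_{\bP(\bw)}(-\sum_i w_i) = \cO_{\bP(\bw)}(-242)$, so
\[
 \omega_{\mathrm{root}}^{\otimes 2} \cong \cO_\bT(-484) \otimes \mathcal{L}^{\otimes 2} \cong \cO_\bT(\deg \Delta_T - 484).
\]
Passing to squares is what makes the comparison clean: it removes the torsion factor $\det$ on one side and the half-integral bundle $\mathcal{L}$ on the other.

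Finally, pulling $\omega_\bM^{\otimes 2} \cong \cO_\bM(20)$ back along $[\Pi]$ and using $[\Pi]^* \omega_\bM \cong \omega_{\mathrm{root}}|_{\bT \setminus S_\bT}$ together with $[\Pi]^* \cO_\bM(1) \cong \cO_\bT(1)$, I get $\cO_\bT(20) \cong \cO_\bT(\deg \Delta_T - 484)$ on $\bT \setminus S_\bT$, hence on the entire root stack by the codimension argument above. As $\cO_\bT(m) \cong \cO_\bT$ forces $m = 0$ (because $\Pic \bP(\bw) = \bZ \cdot \cO(1)$ and $\pi$ has connected fibres), this yields $20 = \deg \Delta_T - 484$, i.e.\ $\deg \Delta_T = 504$. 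The step I expect to require the most care is the bookkeeping of conventions: the $\bCx$-weight normalization of $\cO_\bM(1)$ relative to $\cO_{\bP(\bw)}(1)$, and the fact that the root-stack canonical bundle formula is applied over the open locus where everything is smooth enough and then extended across the codimension-two set $S_T$, along which the root stack acquires quotient singularities inherited from those of $H_T$; none of this changes the numerics, so once \pref{pr:canonical} and the results of \pref{sc:H} are in hand the proposition is essentially a one-line computation.
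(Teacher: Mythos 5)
Your proposal is correct and follows essentially the same route as the paper: both compare $\omega_\bM \cong \cO_\bM(10)\otimes\det$ from \pref{pr:canonical} with the canonical bundle of the order-two root stack of $\bP(\bw)$ along $H_T$ (computed via $\omega_{\bP(\bw)}\cong\cO(-242)$ and the ramification/root-stack formula), transported through the isomorphism of \pref{sc:H}. The only cosmetic difference is that you square both sides to kill the torsion factor $\det$ and the half-integral root bundle, whereas the paper tracks the $\bZ/2$-summand of the Picard group of the double-cover orbifold $\bV$ explicitly; the resulting equation is the same one, $-242+d/2=10$, up to a factor of two.
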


\begin{proof}
Since the root stack is the quotient of the branched double cover,
the ramification formula for the canonical bundle gives
\begin{align*}
 \omega_{\bT^*}
  &\cong p^* \omega_{\bP} \otimes \cO_{\bT^*} (H_{\bT^*}) \\
  &\cong \cO_{\bT^*}(-242) \otimes \cO_{\bT^*}(H_{\bT^*}) \\
  &\cong \cO_{\bT^*}(-242+\deg \Delta_T/2)
    \otimes (\cO_{\bT^*}(-\deg \Delta_T/2) \otimes \cO_{\bT^*}(H_{\bT^*})).
\end{align*}
Since this is isomorphic to
$
 \Pi_{\bT}^* \omega_{\bM},
$
one has
$
 -242+\deg \Delta_T/2=10
$
and hence
$
 \deg \Delta_T=504.
$
\end{proof}

One has
\begin{align}
 A(\Gamma)
  &\coloneqq \bigoplus_{k=0}^\infty \bigoplus_{\chi \in \Char(\Gamma)}
      A_k(\Gamma, \chi) \\
  &\cong \bigoplus_{\cL \in \Pic \bM} H^0(\cL) \\
  &\cong \bigoplus_{\cL \in \Pic \bT^*} H^0(\cL),
\end{align}
which is generated by the polynomial ring
$
 \bigoplus_{k=0}^\infty H^0(\cO_{\bT^*}(k))
  \cong \bigoplus_{k=0}^\infty H^0(\cO_{\bP}(k))
$
and the canonical section
$
 s \in H^0 \lb \cO_{\bT^*}(H_{\bT^*}) \rb
$
satisfying $s^2 = \Delta_T$.
This concludes the proof of \pref{cr:main}.

\section{Discriminant and resultant}
 \label{sc:disc}

The discriminant of
$
 y^3 + g_2(x,w;t) y + g_3(x,w;t)
$
as a polynomial of $y$ is given by
$
 4 g_2(x,w;t)^3 + 27 g_3(x,w;t)^2,
$
which defines a hypersurface
of degree 84 in $\bP(6,1) = \bProj \bC[x,w]$.
In other words,
$
 [4 g_2(x,w;t)^3 + 27 g_3(x,w;t)^2]/w^{84}
$
is a polynomial of degree 14
in $x/w^6$.
Let $k(t)$ be the discriminant of this polynomial in one variable,
which is a homogeneous polynomial
of degree $14 \cdot 13 \cdot 6 = 1092$ in $t$.
A general point
on the divisor $D \subset \bfP(\bw)$
defined by $k(t)$
corresponds to the locus
where two fibers of Kodaira type $\mathrm{I}_1$ collapse
into one fiber.
The divisor $D$ is a linear combination
of two prime divisors $D_1$ and $D_2$.
A general point on the component $D_1$ corresponds to the case
when there exists a point $p = [x:w] $ on $\bP(6,1)$ such that
neither $g_2$ nor $g_3$ vanishes at $p$,
and a general point on the other component $D_2$ corresponds to the case
when both $g_2$ and $g_3$ vanish at $p$.
In the former case,
the resulting singular fiber is of Kodaira type $\mathrm{I}_2$,
and the surface $\Ybar_t$ acquires an $A_1$-singularity.
In the latter case,
the resulting singular fiber is of Kodaira type I\!I,
and the surface $\Ybar_t$ does not acquire any new singularity.
The defining equation of $D_1$ is $\Delta_T$.
The defining equation of $D_2$
is the resultant of $g_2$ and $g_3$,
which is given as the determinant
%\begin{align}
%r=
%\left|
%\begin{array}{cccccccccccc}
% t_4 & t_{10} & t_{16} & t_{22} & t_{28} \\
% & t_4 & t_{10} & t_{16} & t_{22} & t_{28} \\
% && t_4 & t_{10} & t_{16} & t_{22} & t_{28} \\
% &&& t_4 & t_{10} & t_{16} & t_{22} & t_{28} \\
% &&&& t_4 & t_{10} & t_{16} & t_{22} & t_{28} \\
% &&&&& t_4 & t_{10} & t_{16} & t_{22} & t_{28} \\
% &&&&&& t_4 & t_{10} & t_{16} & t_{22} & t_{28} \\
% 1 & 0 & t_{12} & t_{18} & t_{24} & t_{30} & t_{36} & t_{40} \\
% & 1 & 0 & t_{12} & t_{18} & t_{24} & t_{30} & t_{36} & t_{40} \\
% && 1 & 0 & t_{12} & t_{18} & t_{24} & t_{30} & t_{36} & t_{40} \\
% &&& 1 & 0 & t_{12} & t_{18} & t_{24} & t_{30} & t_{36} & t_{40}
%\end{array}
%\right|
%\end{align}
\begin{align}
f_2=
\left|
\begin{array}{cccccccccccc}
 t_{28} & t_{22} & t_{16} & t_{10} & t_4 \\
 & t_{28} & t_{22} & t_{16} & t_{10} & t_4 \\
 && t_{28} & t_{22} & t_{16} & t_{10} & t_4 \\
 &&& t_{28} & t_{22} & t_{16} & t_{10} & t_4 \\
 &&&& t_{28} & t_{22} & t_{16} & t_{10} & t_4 \\
 &&&&& t_{28} & t_{22} & t_{16} & t_{10} & t_4 \\
 &&&&&& t_{28} & t_{22} & t_{16} & t_{10} & t_4 \\
 t_{42} & t_{36} & t_{30} & t_{24} & t_{18} & t_{12} & 0 & 1 \\
 & t_{42} & t_{36} & t_{30} & t_{24} & t_{18} & t_{12} & 0 & 1 \\
 && t_{42} & t_{36} & t_{30} & t_{24} & t_{18} & t_{12} & 0 & 1 \\
 &&& t_{42} & t_{36} & t_{30} & t_{24} & t_{18} & t_{12} & 0 & 1
\end{array}
\right|
\end{align}
of the Sylvester matrix,
which is homogeneous of degree $d_2=196$.

\begin{lemma} \label{lm:order}
For a polynomial
$
 f(x,y,t) = y^3 + g_2(x,t) y + g_3(x,t)
$
in three variables,
%with coefficients in $\bC[t]$,
let
$
 h(x,t) = 4 g_2(x,t)^3 + 27 g_3(x,t)^2
%  \in \bC[x,t]
$
be the discriminant of $f(x,y,t)$
as a polynomial in $y$,
$k(t)$ be the discriminant of $h(x,t)$
as a polynomial in $x$, and
$r(t)$ be the resultant of the pair
$(g_2(x,t), g_3(x,t))$ as polynomials of $x$.
Then one has $k(t) = r(t)^3 \cdot \ell(t)$
for some polynomial $\ell(t) \in \bC[t]$.
\end{lemma}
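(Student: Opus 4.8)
The plan is to reduce to the case of polynomials with indeterminate coefficients, where the resultant is irreducible, to establish the divisibility there by a local order-of-vanishing computation, and then to specialize. Write $g_2(x,t) = \sum_{i=0}^{d_2} A_i(t)\,x^i$ and $g_3(x,t) = \sum_{j=0}^{d_3} B_j(t)\,x^j$, introduce independent indeterminates $a_0,\dots,a_{d_2},b_0,\dots,b_{d_3}$ over $\bC$, and set $G_2 = \sum_i a_i x^i$, $G_3 = \sum_j b_j x^j$, $H = 4 G_2^3 + 27 G_3^2$, $R = \operatorname{Res}_x(G_2,G_3)$ and $K = \operatorname{disc}_x(H)$, all regarded as elements of $\bC[a_\bullet,b_\bullet]$. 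The Sylvester resultant and the discriminant are universal polynomial expressions in the coefficients, compatible with the substitution $a_i\mapsto A_i(t)$, $b_j\mapsto B_j(t)$ as long as $\deg_x h$ is not lowered by it; this holds in the present setting, where $2\deg_x g_3 > 3\deg_x g_2$, so that $\operatorname{lc}_x h = 27\,\operatorname{lc}_x(g_3)^2 = 27$, and the substitution carries $R$ to $r(t)$ and $K$ to $k(t)$. It is therefore enough to prove $R^3 \mid K$ in $\bC[a_\bullet,b_\bullet]$; then $K/R^3$ lies in $\bC[a_\bullet,b_\bullet]$, and $\ell(t)$ is its image under the substitution.

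Since $R = \operatorname{Res}_x(G_2,G_3)$ is irreducible (the resultant of two generic univariate polynomials is irreducible) and $\bC[a_\bullet,b_\bullet]$ is a unique factorization domain, $R^3\mid K$ is equivalent to $\ord_{\{R=0\}} K\ge 3$, an order that may be computed at a general point $P_0$ of the irreducible hypersurface $\{R=0\}$. At such a point the specializations $\bar g_2,\bar g_3$ of $G_2,G_3$ have a single common zero $\alpha_0$, simple for each, and every zero of $\bar h := 4\bar g_2^3 + 27\bar g_3^2$ other than $\alpha_0$ is simple; moreover $\ord_{\alpha_0}\bar h = 2$, because $\ord_{\alpha_0}\bar g_2 = \ord_{\alpha_0}\bar g_3 = 1$ forces $\ord_{\alpha_0}(4\bar g_2^3) = 3$ and $\ord_{\alpha_0}(27\bar g_3^2) = 2$. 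Working analytically near $P_0$, let $\alpha$ and $\beta$ be the zeros of $G_2$ and $G_3$ that specialize to $\alpha_0$ — holomorphic functions of the coefficients by the implicit function theorem, as $\alpha_0$ is a simple zero of each — let $\rho_1,\rho_2$ be the two zeros of $H$ that specialize to $\alpha_0$, and put $\delta := \alpha-\beta$, $c_2 := \bar g_2'(\alpha_0)\ne 0$, $c_3 := \bar g_3'(\alpha_0)\ne 0$. Near $P_0$ one has $\{R=0\} = \{\delta = 0\}$ as sets, and in $R = \operatorname{lc}_x(G_2)^{d_3}\prod_k G_3(\alpha_k)$ (the product over the zeros $\alpha_k$ of $G_2$) every factor other than $G_3(\alpha) = \delta\,(c_3 + O(\delta))$ is a non-vanishing holomorphic function; hence $R$ and $\delta$ both vanish to order one along $\{R=0\}$, so $\delta$ is a local defining function for it.

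It therefore remains to show that $(\rho_1-\rho_2)^2$ vanishes to order exactly three along $\{\delta = 0\}$; granting this, $K = \operatorname{lc}_x(H)^{2\deg_x H-2}\prod_{k<l}(\gamma_k-\gamma_l)^2$ over the zeros $\gamma_\bullet$ of $H$, whose only factor degenerating near $P_0$ is $(\rho_1-\rho_2)^2$, vanishes to order exactly three there, and $R^3\mid K$ follows. Near $\alpha_0$ one has $G_2 = c_2(x-\alpha) + O((x-\alpha)^2)$ and $G_3 = c_3(x-\beta) + O((x-\beta)^2)$ with $c_2,c_3$ holomorphic and non-zero, so after the substitution $x = \beta + s$, $H = 27 c_3^2 s^2 + 4 c_2^3(s-\delta)^3 + (\text{terms of higher order})$. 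The Newton polygon of $27 c_3^2 s^2 + 4 c_2^3(s-\delta)^3$ in the variables $(s,\delta)$ has an edge joining the monomials $s^2$ and $\delta^3$, along which the two small roots are $s = (\text{non-zero constant})\,\delta^{3/2} + \cdots$ for the two opposite square roots; the higher-order terms lie strictly above this edge and do not affect the leading behaviour. Hence $(\rho_1-\rho_2)^2 = (s_1-s_2)^2$ is of order exactly three in $\delta$, which proves the claim and the lemma. The one non-routine point is this last computation — that the two branches of $\{H=0\}$ colliding at a simple common zero of $g_2$ and $g_3$ separate like $\delta^{3/2}$, so that the discriminant of $h$ acquires exactly a third power of the resultant; everything else is standard manipulation with resultants and discriminants.
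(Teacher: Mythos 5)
Your proof is correct and follows essentially the same route as the paper's: reduce to the generic situation, observe that $r$ cuts out (with multiplicity one) the locus where $g_2$ and $g_3$ acquire a simple common root, and check that the discriminant of $h$ vanishes to order exactly $3$ there. The only cosmetic difference is that the paper verifies the local order by explicitly computing the discriminant of the cubic $4x^3+27(x+\alpha-\beta)^2$, whereas you extract the same $\delta^{3/2}$ separation of the two colliding roots from the Newton polygon — which is in fact the argument the paper itself uses in the remark generalizing the lemma.
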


\begin{proof}
We may assume that coefficients of $g_2(x,t)$ and $g_3(x,t)$ are generic.
In general,
if we set $h(x,t) = g_2(x,t)^n - g_3(x,t)^m$
for $n > m \ge 1$,
then the order of vanishing of the discriminant $k(t)$
of $h(x,t)$ along the resultant $r(t)$ of $g_2(x,t)$ and $g_3(x,t)$
is given by $n(m-1)$.
This follows from the fact that
the set of solutions of
\begin{align}
 (x-\alpha)^n-(x-\beta)^m=0
\end{align}
for $n > m$ near $\alpha = \beta$
consists of $m$ solutions of the form
\begin{align}
 a_i = \beta + \zeta_m^i (\beta-\alpha)^{n/m} + o((\beta-\alpha)^{n/m}),
  \quad i = 0, \ldots, m-1
\end{align}
and $n-m$ solutions of the form
\begin{align}
 b_j = \alpha + \zeta_{n-m}^j - \frac{m}{n-m} (\beta-\alpha) + o(\beta-\alpha),
  \quad j=0,\ldots,n-m-1,
\end{align}
so that the leading term of the discriminant 
\begin{align}
 \lb \prod_{i<i'}(a_i-a_{i'})^2 \rb \cdot
 \lb \prod_{j<j'}(b_j-b_{j'})^2 \rb \cdot
 \lb \prod_{i,j}(a_i-b_j)^2 \rb
\end{align}
is given by
\begin{align}
 \prod_{i<i'}(a_i-a_{i'})^2
  \sim ((\beta-\alpha)^{n/m})^{m(m-1)} = (\beta-\alpha)^{n(m-1)}.
\end{align}
\end{proof}

Since
$
 \deg D - 3 \deg D_2
  = 1092 - 3 \cdot 196
  = 504
  = \deg D_1,
$
one has $D = D_1 + 3 D_2$ and
\begin{align} \label{eq:disc}
 \Delta_T(t) = \frac{k(t)}{r(t)^3}.
\end{align}

\bibliographystyle{amsalpha}
\bibliography{bibs}

\end{document}